\newtheorem{theorem}{Theorem}[section]
\newtheorem{lemma}[theorem]{Lemma}
\newtheorem{proposition}[theorem]{Proposition}
\theoremstyle{definition}
\newtheorem{definition}[theorem]{Definition}
\theoremstyle{remark}
\newtheorem{remark}[theorem]{Remark}
\newtheorem{example}[theorem]{Example}
\numberwithin{equation}{section}
\def\d{{\rm d}}
\def\e{{\rm e}}
\def\E{\mathbb E}
\def\R{\mathbb R}
\def\P{\mathbb P}
\def\bL{\mathbb L^2}
\def\la{\left(}
\def\ra{\right)}
\def\mc{\mathscr}
\def\be{\begin{equation}\label}
\def\ee{\end{equation}}
\def\bd{\begin{definition}\label}
\def\ed{\end{definition}}
\def\bt{\begin{theorem}\label}
\def\et{\end{theorem}}
\def\bl{\begin{lemma}\label}
\def\el{\end{lemma}}
\begin{document}
\title[]{On Linear Stochastic Flows}
\author{Beniamin Goldys}
\address{School of Mathematics and Statistics, The University of Sydney, Sydney 2006, Australia}
\email{Beniamin.Goldys@sydney.edu.au}
\author{Szymon Peszat}
\address{Institute of Mathematics, Jagiellonian University, {\L}ojasiewicza 6, 30-348 Krak\'ow, Poland}
\email{napeszat@cyf-kr.edu.pl}
\thanks{This  work was partially supported by the ARC Discovery Grant DP200101866.}
\thanks{The work of Szymon Peszat    was supported by Polish National Science Center grant  2017/25/B/ST1/02584.}
\keywords{Stochastic flow, stochastic equations with multiplicative noise}
\subjclass[2000]{60H15, 60G15, 60G60}
\begin{abstract}
We study the existence of the stochastic flow associated to a linear stochastic evolution equation  
$$
\d X= AX\d t +\sum_{k} B_k X\d W_k,
$$
on a Hilbert space. Our first result covers the case  where $A$ is the generator of a $C_0$-semigroup, and $(B_k)$ is a  sequence of  bounded linear operators such that $\sum_k\|B_k\|<+\infty$. We also provide sufficient conditions for the existence of stochastic flows in the Schatten classes beyond the space of Hilbert-Schmidt operators. Some new results and examples concerning the so-called commutative case are presented as well. 
\end{abstract}
\maketitle
\tableofcontents


\section{Introduction}
Consider the following linear evolution equation 
\begin{equation}\label{E11}
\d X= AX\d t + \sum_{k} B_k X\d W_k,\qquad t\ge s,\,\,X(s)=x,
\end{equation}
where $(W_k)$ is a  sequence of independent standard real valued Wiener processes defined on a probability space $(\Omega, \mathfrak{F},\mathbb{P})$,  $A$ is the generator of a $C_0$-semigroup $\left(\e^{tA}\right)$  on a Hilbert space $(H,\langle \cdot,\cdot\rangle_H)$ and  $B_k$, $k=1, \ldots $,  are possibly unbounded linear operators on $H$.  By the solution to $\eqref{E11}$ we understand the so-called mild solution defined as  an  adapted to the filtration generated by $(W_k)$ and having continuous trajectories in $H$ process $X^x_s(t)$, $t\ge s$, satisfying the integral equation  
$$
X^x_s(t)= \e^{(t-s)A}x +\sum_{k} \int_s^t \e^{(t-r)A} B_k X^x_s(r)\d W_k(r), \qquad t\ge s.  
$$
For more details see the books of Da Prato and Zabczyk \cite{DaPrato-Zabczyk} and   Flandoli \cite{Flandoli}. 

Let us denote by $L(H,H)$ the space of all bounded linear operators on $H$. We denote by $\|\cdot \|$ the operator norm on $L(H,H)$ and by $\vert \cdot \vert _H$ the norm on $H$.  Let $\Delta := \{(s,t)\colon 0\le s\le t<+\infty\}$. 
\begin{definition}
We say that \eqref{E11} defines a \emph{stochastic flow} if there exists a  mapping  $\mathcal{X}\colon\Delta\times \Omega\mapsto L(H,H)$  such that: 
\begin{itemize}
\item[(i)]  for every $s\ge 0$ and $x\in H$, the process $\mathcal{X}(s,t;\cdot)(x)$, $t\ge s$, has continuous trajectories in $H$,  $\P$-a.s.,
\item[(ii)]  for every $s\ge 0$ and $x\in H$, we have $\mathcal{X}(s,t)(x)= X^x_s(t)$, for all $t\ge s$,   $\mathbb{P}$-a.s., 
\item[(iii)] for all $0\le s\le t\le r$ and $\omega\in \Omega$, $\mathcal{X}(t,r;\omega)\circ  \mathcal{X}(s,t;\omega)= \mathcal{X}(s,r;\omega)$. 
\end{itemize}
\end{definition}

\begin{remark}
{\rm Property $(i)$ means that for each $s$ and $x$, the solution $X^x_s(t)= \mathcal{X}(s,t;\cdot)(x)$ has continuous trajectories. This holds in the most of interesting cases by means of the so-called Da Prato-Kwapie\'n-Zabczyk factorization, see e.g. \cite{DaPrato-Zabczyk}. In particular, the solution has the required property  if  $(B_k)$ is a finite sequence of bounded operators on $H$.  Note, that property $(iii)$ of the stochastic flow follows from the existence and uniqueness of the solution. Finally, note that for all $0\le s\le t\le r\le h$ and $ x,y\in H$, the random variables $\mathcal{X}(s,t)(x)$  and $\mathcal{X}(r,h)(y)$ are independent. Stochastic flows with this property are known as Brownian flows.}
\end{remark}
The theory of stochastic flows for linear and nonlinear stochastic differential equations in finite dimensional spaces is well established, see for example \cite{Kunita}.  In particular, it is known  that the nonlinear SDE 
\begin{equation}\label{E12}
\d X= b(X)\d t + \sigma(X)\d W, \qquad X(s)=x, 
\end{equation}
defines a (nonlinear) stochastic flow if $b$ and $\sigma$ are $C^1$ with bounded derivatives. The proof of this result requires a Sobolev embedding theorem. The case of additive noise, where $\sigma$ does not depend on $X$, is easier. Namely, let $Y^x_s(t)$, $t\ge s$, be  the solution to  the ordinary differential equation with random coefficients 
\begin{equation}\label{E13}
\d Y(t)= b\left(Y(t)+\sigma (W(t)-W(s))\right)\d t, \qquad Y(s)=x. 
\end{equation}
Then the solution $X^x_s$ to \eqref{E12} is given by $X^x_s(t)= Y^x_s(t)+ \sigma (W(t)-W(s))$, and consequently \eqref{E12} defines a stochastic flow if \eqref{E13} defines a flow. 
\par
It is natural to ask a question about the existence of stochastic flow associated to a stochastic evolution equation 
\begin{equation}\label{E111}
\d X=(AX+F(X))\d t+\sum_k\sigma_k(X)\d W_k,\quad X(0)=x\,,
\end{equation}
where $H$ is an infinite-dimensional Hilbert space and $A$ is the  generator of a $C_0$-semigroup on $H$. If $\sigma$ does not depend on $X$, then one can apply the same argument as in in the finite dimensional case.  In this paper we focus on a linear equation \eqref{E11}, where $F=0$ and $\sigma_k(x)=B_kx$. Skorokhod showed in his famous example \cite{skorokhod} that the question of the existence of the flow for equation \eqref{E11} is much more intricate than in finite dimensions. In Section 4 we consider a more general version of the Skorokhod example to obtain new and interesting phenomena. 
\par
In general, it is not possible to prove the existence of the flow in infinite dimensions using the Sobolev embedding theorems that are not available in infinite dimensions. However, for many nonlinear equations of the form \eqref{E111}  the corresponding flows can be obtained as continuous transformations of the flows corresponding to ordinary or partial differential equations with random coefficients (see e.g. \cite{Brzezniak-Albeverio-Dalecki, Barbu-Rockner-1, Barbu-Rockner-2, Barbu-Rockner-Zhang, Brzezniak-Capinski, Brzezniak-vanNeerven, Brzezniak-Manna, ggn,Lisei}). In fact,  we will use a similar method in the proof of   our main result concerning linear equations.  A very general approach to the question of existence of stochastic flows for  non-linear stochastic partial differential equations can be found in  \cite{Flandoli-Lisei}. 
\par
It is tempting to obtain the existence of the flow $\mathcal X$ corresponding to linear equation \eqref{E11} by solving a linear equation 
\[\d \mathcal X= A\mathcal X\d t + \sum_{k} B_k \mathcal X\,\d W_k,\qquad t\ge s,\,\,\mathcal X(s)=I\,,\]
on the space of bounded operators. Unfortunately, the theory of stochastic integration on the space of all bounded linear operators on infinite dimensional Hilbert space   does not exist. It is possible however to integrate in some smaller spaces, such as the Schatten classes. This approach has been applied by Flandoli \cite{Flandoli}) in the case of Hilbert-Schmidt operators. In Section \ref{SSchatten} of this paper we use the theory of stochastic integration on $M$-type 2 Banach spaces to extend the Flandoli result to the scale of Schatten classes $\mathbb S^p$ for all $p\in[2,\infty)$. We will show that in the diagonal case solutions in the Schatten classes can exist $\P$-a.s. but not in the sense of expected value. In particular, in Proposition 4.3.3 we give conditions under which $\mathrm{Tr}(\mathcal X(t))<+\infty$,  $\P$-a.s. while $\E\,\mathrm{Tr}(\mathcal X(t))=+\infty$. It would be interesting to find more general conditions for such a behaviour of stochastic flows.
\par\medskip
The paper is organized as follows:  in the next section we prove a theorem dealing with equation \eqref{E11} with a sequence  of bounded operators $B_k$. We do not assume that the operators commute, therefore there are no explicit solutions. Then in Sections \ref{SDiagonal} we examine the case of commuting $B_k$. In this case  the solution is given in an explicit form, and this allows us to construct interesting examples that lead to new questions that remain open.  In Section  \ref{SSchatten} we study the existence of stochastic flows in Schatten classes. 


\section{Main results}
In this section we consider a linear stochastic equation on a separable Hilbert space $H$:
\begin{equation}\label{E21}
\d Y= B_0Y\d t + \sum_{k}B_kY\d W_k
\end{equation}
driven by a sequence of  independent Wiener processes $W_k$.  In theorems below we assume that $B_0, B_1, \ldots $ is a sequence of bounded linear operators on $H$ such that 
\begin{equation}\label{Assumption}
M:= \sum_{k} \|B_k\|<+\infty.
\end{equation}

We start with a result on the existence and regularity of the flow. 
\begin{theorem}\label{T21} 
Assume \eqref{Assumption}. Then equation \eqref{E21} 
defines a stochastic flow $\mathcal{Y}$  on $H$. Moreover, for any $T\in (0,+\infty)$, and $q\ge 1$, 
\begin{equation}\label{E22}
\sup_{0\le s\le t\le T} \mathbb{E}\left\| \mathcal{Y}(s,t)\right\|^{q} <+\infty.
\end{equation}
Moreover, for every $T\in (0,+\infty)$, and $L= 2,3,\ldots$, there exists a constant $C$ such that for all for $0\le s\le t\le u\le T$ we have  
\begin{equation}\label{E23}
\mathbb{E}\left\| \mathcal{Y}(s,t)- \mathcal{Y}(s, u)\right\|^{2L} \le C\left\vert t-u\right\vert ^{L-1}. 
\end{equation}
Consequently, by the Kolmogorov test, for any $s\ge 0$, $\mathbb{P}$-a.s.  $\mathcal{Y}(s,t)$ is a H\"older continuous $L(H,H)$-valued mapping of $t\ge s$, with exponent $\gamma <1/2$. Finally, for  any $T\in (0,+\infty)$, and $L= 2,3,\ldots$, there exists a constant $C$ such that for all for $0\le s\le t\le u\le T$ we have  
\begin{equation}\label{E24}
\mathbb{E}\left\| \mathcal{Y}(s,t)- \mathcal{Y}(u, v)\right\|^{2L} \le C\left( \left \vert u-s\right\vert + \left\vert t-v\right\vert\right)  ^{L-1}. 
\end{equation}
Consequently, by the Kolmogorov test, for any $s\ge 0$, $\mathbb{P}$-a.s.  $\mathcal{Y}(s,t)$ is a H\"older continuous $L(H,H)$-valued mapping of $(t,s)\in \Delta $, with exponent $\gamma <1/2$. 
 \end{theorem}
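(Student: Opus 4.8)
The plan is to construct the fundamental solution $\mathcal{Y}(s,t)\in L(H,H)$ directly by Picard iteration and then read off the flow and its regularity from uniform moment bounds. Setting $\mathcal{Y}_0(s,t)=\e^{(t-s)B_0}$ and
\[
\mathcal{Y}_{n+1}(s,t)=\e^{(t-s)B_0}+\sum_k\int_s^t\e^{(t-r)B_0}B_k\,\mathcal{Y}_n(s,r)\,\d W_k(r),
\]
I would show that the iterates converge, uniformly on $\Delta\cap\{t\le T\}$, in $L^{2L}(\Omega;L(H,H))$, the limit $\mathcal{Y}$ being the candidate flow. Equivalently $\mathcal{Y}=\sum_{n\ge0}I_n$, where $I_n$ is the $n$-fold iterated stochastic integral whose integrand $\e^{(t-r_n)B_0}B_{k_n}\cdots B_{k_1}\e^{(r_1-s)B_0}$ has operator norm at most $\e^{T\|B_0\|}\|B_{k_1}\|\cdots\|B_{k_n}\|$ on the simplex $s<r_1<\cdots<r_n<t$. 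Summing these bounds over the multi-indices $(k_1,\dots,k_n)$ produces the deterministic factor $\e^{T\|B_0\|}(M(t-s))^n/n!$, whose series converges; this indicates that the Neumann series will converge once the \emph{stochastic} oscillation is controlled, and here \eqref{Assumption} enters decisively.

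The heart of the matter, and the step I expect to be the genuine obstacle, is the operator-norm estimate \eqref{E22}. The difficulty is that $L(H,H)$ is not a martingale type $2$ (nor even UMD) space, so there is no general Burkholder--Davis--Gundy inequality for the $L(H,H)$-valued stochastic convolution; this is the analytic shadow of Skorokhod's phenomenon, in which the operator norm, a supremum over infinitely many directions, diverges although each coordinate stays finite. The summability $M=\sum_k\|B_k\|<+\infty$ is precisely the quantitative device that rules this out. Concretely, I would apply the triangle inequality in $L^{2L}(\Omega;L(H,H))$ to split the cylindrical integral $\sum_k\int_s^t\e^{(t-r)B_0}B_k\mathcal{Y}(s,r)\,\d W_k$ into a sum over $k$ of single--Wiener-process integrals, estimate each term using $\|B_k\|$ as a weight, re-sum against $M$, and apply the Hölder step $(\int\|\cdot\|^2)^{L}\le(t-s)^{L-1}\int\|\cdot\|^{2L}$ to arrive at
\[
\mathbb{E}\|\mathcal{Y}(s,t)\|^{2L}\le c\,\e^{2LT\|B_0\|}+c\,M^{2L}(t-s)^{L-1}\int_s^t\mathbb{E}\|\mathcal{Y}(s,r)\|^{2L}\,\d r ,
\]
after which Gronwall's lemma yields \eqref{E22} for even exponents, and then for all $q\ge1$ by Jensen. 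Justifying the single-process estimate \emph{in the operator norm} — that is, showing that absolute summability of the $\|B_k\|$ together with the one-dimensionality of each $W_k$ compensates for the lack of type $2$ — is the delicate point on which everything rests.

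Granting \eqref{E22}, the remaining assertions are comparatively routine. Property $(ii)$ holds by construction and uniqueness of mild solutions, which also forces the cocycle identity $(iii)$, $\mathcal{Y}(t,r)\circ\mathcal{Y}(s,t)=\mathcal{Y}(s,r)$, first almost surely for fixed times and then for all $\omega$ after passing to the continuous version obtained below. For the temporal increment \eqref{E23} with $s\le t\le u$ I would use $(iii)$ to write $\mathcal{Y}(s,u)-\mathcal{Y}(s,t)=(\mathcal{Y}(t,u)-I)\,\mathcal{Y}(s,t)$, whence $\|\mathcal{Y}(s,u)-\mathcal{Y}(s,t)\|\le\|\mathcal{Y}(t,u)-I\|\,\|\mathcal{Y}(s,t)\|$; the small-time factor $\mathbb{E}\|\mathcal{Y}(t,u)-I\|^{2L}\le C|u-t|^{L-1}$ follows from the mild equation on $[t,u]$ (the deterministic part being $O(|u-t|^{2L})$ and the stochastic part handled by the operator-norm estimate of the previous paragraph together with \eqref{E22}), and a Cauchy--Schwarz split in $\Omega$ against \eqref{E22} gives \eqref{E23}. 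For the joint increment \eqref{E24} I would bound $\|\mathcal{Y}(s,t)-\mathcal{Y}(u,v)\|$ by a final-time increment plus an initial-time increment; the latter, through $\mathcal{Y}(s,v)-\mathcal{Y}(u,v)=\mathcal{Y}(u,v)\,(\mathcal{Y}(s,u)-I)$, is estimated in the same manner, and the two contributions combine into the factor $(|u-s|+|t-v|)^{L-1}$.

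Finally, since $L(H,H)$ is a complete metric space under the operator norm — and completeness, not separability, is what the Kolmogorov--Chentsov criterion requires for an index set $\Delta\subset\R^2$ — the moment bounds \eqref{E23}--\eqref{E24}, with $L$ taken large, produce for each $s$ a continuous, indeed $\gamma$-Hölder for every $\gamma<1/2$, $L(H,H)$-valued modification in $t$ and jointly in $(s,t)$. This is the asserted regularity, and it delivers property $(i)$ a fortiori, since $H$-valued continuity of $t\mapsto\mathcal{Y}(s,t)x$ is weaker than operator-norm continuity of $\mathcal{Y}(s,t)$.
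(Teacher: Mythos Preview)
You have correctly located the crux: an operator-norm moment bound on an $L(H,H)$-valued stochastic integral in a space with no martingale type~$2$ structure. But the route you propose --- Gronwall fed by a single-Wiener-process BDG estimate --- does not close. Even for one real Brownian motion $W$ and a \emph{deterministic} bounded integrand, the inequality
\[
\mathbb{E}\left\|\int_0^1 \Psi(r)\,\d W(r)\right\|_{L(H,H)}^{2} \;\le\; C\int_0^1 \|\Psi(r)\|_{L(H,H)}^{2}\,\d r
\]
fails, so ``one-dimensionality of each $W_k$'' does not by itself compensate for the lack of type~$2$. Concretely: with $(e_j)$ an orthonormal basis of $H$, put $\phi_j(r)=\sqrt{2}\sin(j\pi r)$ and $\Psi(r)=\sum_j \phi_j(r)\,e_j\otimes e_j$ on $[0,1]$. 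Then $\|\Psi(r)\|\le\sqrt{2}$ for every $r$, yet $\int_0^1\Psi\,\d W$ is diagonal with eigenvalues $\int_0^1\phi_j\,\d W$, which are i.i.d.\ $\mathcal{N}(0,1)$, so $\bigl\|\int_0^1\Psi\,\d W\bigr\|=+\infty$ a.s. This is the Skorokhod phenomenon reappearing inside your loop; the Gronwall step cannot be justified, and neither can the small-time bound $\mathbb{E}\|\mathcal{Y}(t,u)-I\|^{2L}\le C|u-t|^{L-1}$ that you invoke for \eqref{E23}.

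The paper avoids $L(H,H)$-valued stochastic integration altogether. It sets $W_0(t)\equiv t$, so that the drift $B_0$ is absorbed into the sum, and expands
\[
\mathcal{Y}(s,t)=I+\sum_{n\ge 1}\ \sum_{\alpha\in\{0,1,\dots\}^n} I_\alpha(s,t)\,B^\alpha,
\]
where each $I_\alpha(s,t)$ is a \emph{scalar} iterated integral and $B^\alpha=B_{\alpha_1}\cdots B_{\alpha_n}$ is a \emph{fixed, deterministic} operator. Hence $\|I_\alpha(s,t)\, B^\alpha\|=|I_\alpha(s,t)|\,\|B^\alpha\|$, and the operator norm is dominated by the purely real series $\xi(s,t)=\sum_{n}\sum_{|\alpha|=n}|I_\alpha(s,t)|\,\|B^\alpha\|$. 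One then bounds $\mathbb{E}\,\xi(s,t)^{2L}$ using only scalar moment estimates for the $I_\alpha$ together with $\sum_{|\alpha|=n}\|B^\alpha\|\le M^n$; no Banach-space BDG is needed, and this is exactly where $M=\sum_k\|B_k\|<+\infty$ enters. In your scheme the semigroup factors $\e^{(r_{i+1}-r_i)B_0}$ make the integrand depend on the integration variables and destroy this scalar--operator factorisation; the remedy is to unfold $\e^{\cdot B_0}$ as well, i.e.\ to treat $B_0$ on the same footing as the other $B_k$. Once \eqref{E22} and the bound on $\mathcal{Y}(t,u)-I$ are obtained from the scalar series, your flow-property/Cauchy--Schwarz derivation of \eqref{E23}--\eqref{E24} and the Kolmogorov step are fine and match the paper's.
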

 \begin{theorem}\label{T21a}
Assume \eqref{Assumption}. Let  $\tilde B= \sum_{k\ge 1}B_k^2$, and  let $\mathcal{Y}$ be the flow defined by \eqref{E21}. Then for all $0\le s\le t$,  $\mathcal{Y}(s,t)$ is an invertible operator $\mathbb{P}$-a.s. and $\mathcal{Z}(s,t) = \left(\mathcal{Y}^{-1}(s,t)\right)^\star $ is the flow defined by  the equation 
\begin{equation}\label{E25}
\d Z= \left( -B_0^\star+ \tilde B^\star\right) Z\d t - \sum_{k} B_k^\star  Z \d W_k,\quad Z(s)=z\,.
\end{equation}
\end{theorem}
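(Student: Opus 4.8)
The plan is to first verify that equation \eqref{E25} falls under Theorem \ref{T21}, and then to identify the flow it generates with $(\mathcal{Y}^{-1}(s,t))^\star$. Since $B_0$ is bounded, so is $-B_0^\star+\tilde B^\star$: indeed $\tilde B=\sum_k B_k^2$ converges in operator norm because $\sum_k\|B_k^2\|\le\sum_k\|B_k\|^2\le M^2<+\infty$, and $\sum_k\|{-}B_k^\star\|=\sum_k\|B_k\|=M<+\infty$, so \eqref{E25} satisfies \eqref{Assumption} and Theorem \ref{T21} provides a flow $\mathcal{Z}$ with, for each fixed $s$ and $\mathbb{P}$-a.s., operator-norm continuity of $t\mapsto\mathcal{Z}(s,t)$. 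It therefore suffices to prove that $\mathbb{P}$-a.s., for all $s\le t$,
\[
\mathcal{Z}(s,t)^\star\,\mathcal{Y}(s,t)=I \qquad\text{and}\qquad \mathcal{Y}(s,t)\,\mathcal{Z}(s,t)^\star=I,
\]
since together these say that $\mathcal{Y}(s,t)$ is boundedly invertible with $\mathcal{Y}^{-1}(s,t)=\mathcal{Z}(s,t)^\star$, equivalently $\mathcal{Z}(s,t)=(\mathcal{Y}^{-1}(s,t))^\star$.

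For the first identity I would fix $s$ and $x,y\in H$ and test against vectors, which avoids stochastic integration in $L(H,H)$. Because all coefficients are bounded (the generator $B_0$ being bounded, the mild and strong formulations coincide), $u_t:=\mathcal{Y}(s,t)x$ and $v_t:=\mathcal{Z}(s,t)y$ are genuine $H$-valued It\^o processes, and It\^o's product formula applies to $\phi_t:=\langle u_t,v_t\rangle_H$. The first-order terms contribute $\langle B_0u_t,v_t\rangle-\langle B_0u_t,v_t\rangle+\langle\tilde Bu_t,v_t\rangle$, while the It\^o correction is $\sum_k\langle B_ku_t,-B_k^\star v_t\rangle=-\langle\tilde Bu_t,v_t\rangle$, so the drift cancels; the martingale part is $\sum_k(\langle B_ku_t,v_t\rangle-\langle B_ku_t,v_t\rangle)\,\d W_k=0$. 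Hence $\phi_t$ is constant, $\langle\mathcal{Y}(s,t)x,\mathcal{Z}(s,t)y\rangle_H=\langle x,y\rangle_H$. Running over a countable dense set of $x$ and $y$ and invoking the a.s.\ continuity of $t\mapsto\mathcal{Y}(s,t),\mathcal{Z}(s,t)$ upgrades this to: $\mathbb{P}$-a.s., $\mathcal{Z}(s,t)^\star\mathcal{Y}(s,t)=I$ simultaneously for all $t\ge s$.

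The hard part is the second identity, i.e.\ surjectivity of $\mathcal{Y}(s,t)$, because in infinite dimensions a one-sided inverse need not be two-sided. Here I would exploit that the first identity forces $G_t:=\mathcal{Y}(s,t)\mathcal{Z}(s,t)^\star$ to be idempotent: $G_t^2=\mathcal{Y}(\mathcal{Z}^\star\mathcal{Y})\mathcal{Z}^\star=\mathcal{Y}\mathcal{Z}^\star=G_t$. By Theorem \ref{T21} the map $t\mapsto G_t$ is $\mathbb{P}$-a.s.\ continuous in operator norm (adjunction and composition being norm-continuous), and $G_s=I$. A bounded idempotent $G$ with $\|G-I\|<1$ is invertible, and an invertible idempotent equals $I$; therefore the set $\{t\ge s:G_t=I\}$ is nonempty, closed by continuity, and open because continuity makes $\|G_t-I\|<1$ near any time at which $G_t=I$. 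Connectedness of $[s,+\infty)$ then gives $G_t=I$ for all $t\ge s$, $\mathbb{P}$-a.s., which is the missing identity $\mathcal{Y}(s,t)\mathcal{Z}(s,t)^\star=I$.

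Combining the two identities shows $\mathcal{Y}(s,t)$ is invertible with $\mathcal{Y}^{-1}(s,t)=\mathcal{Z}(s,t)^\star$, hence $\mathcal{Z}(s,t)=(\mathcal{Y}^{-1}(s,t))^\star$, as claimed. The only points requiring care are the interchange of the infinite sum over $k$ with the stochastic integral and the It\^o correction (both controlled by $\sum_k\|B_k\|<+\infty$, which makes $\sum_k\|B_k\|^2$ finite), the passage from fixed $(x,y)$ to a statement holding for all vectors and all $t$ simultaneously, and the topological step that converts the algebraic one-sided inverse into a genuine bounded inverse.
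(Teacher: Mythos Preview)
Your proof is correct, and the It\^o-formula computation for the left-inverse identity $\mathcal{Z}(s,t)^\star\mathcal{Y}(s,t)=I$ is essentially identical to the paper's. The difference lies in how invertibility of $\mathcal{Y}(s,t)$ is obtained. The paper proves invertibility \emph{first}, independently of equation \eqref{E25}: using the two-parameter H\"older estimate \eqref{E24} and the flow property, it partitions $[s,t]$ finely enough that each increment $\mathcal{Y}(t^n_k,t^n_{k+1})$ lies within $\tfrac12$ of the identity in operator norm, hence is invertible, and then composes. Only afterwards is the It\^o computation used, and since $\mathcal{Y}(s,t)$ is already known to be a bijection, the one-sided identity $\mathcal{Z}^\star\mathcal{Y}=I$ immediately yields $\mathcal{Z}^\star=\mathcal{Y}^{-1}$. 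You instead establish the one-sided identity first and then upgrade it to a two-sided inverse via your idempotent/connectedness argument on $G_t=\mathcal{Y}(s,t)\mathcal{Z}(s,t)^\star$.

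Each route has its merits. The paper's argument isolates invertibility as a consequence of operator-norm regularity and the flow property alone, a fact of independent interest that does not rely on having guessed the dual equation \eqref{E25}. Your argument is lighter in that it only needs one-parameter continuity $t\mapsto\mathcal{Y}(s,t)$ and $t\mapsto\mathcal{Z}(s,t)$ (not the joint estimate \eqref{E24} or the multiplicative flow identity), at the cost of having to introduce $\mathcal{Z}$ before invertibility is known. Both are clean; the paper's ordering perhaps better highlights that invertibility is a structural feature of any flow satisfying the conclusions of Theorem~\ref{T21}.
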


\begin{theorem}\label{T21b}
Assume \eqref{Assumption} Then for any generator $A$ of a $C_0$-semigroup on $H$,  equation \eqref{E11} defines stochastic flow on $H$. 
\end{theorem}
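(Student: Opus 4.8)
The plan is to reduce \eqref{E11} to a pathwise random evolution equation by transforming away the noise with the help of the flow already constructed in Theorem \ref{T21}. Let $\mathcal{Y}(s,t)$ be the stochastic flow associated with the noise part alone, i.e. with \eqref{E21} for $B_0=0$; it exists by Theorem \ref{T21}, is $\mathbb{P}$-a.s. H\"older continuous in $(s,t)$, and by Theorem \ref{T21a} is $\mathbb{P}$-a.s. invertible, with $\mathcal{Y}(s,t)^{-1}=\mathcal{Z}(s,t)^\star$ the adjoint of the flow of \eqref{E25}. Since $-B_0^\star+\tilde B^\star$ and $-B_k^\star$ again satisfy \eqref{Assumption}, the inverse enjoys the same a.s. bounds \eqref{E22}--\eqref{E24}. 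I would then look for the flow of \eqref{E11} in the factorized form $\mathcal{X}(s,t)=\mathcal{Y}(s,t)\,\mathcal{U}(s,t)$, where for each $\omega$ the operator family $t\mapsto\mathcal{U}(s,t)$ is of finite variation in time and carries the action of $A$.

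Substituting $\mathcal{X}=\mathcal{Y}\mathcal{U}$ and applying the It\^o product rule, the cross-variation term drops out because $\mathcal{U}$ has no martingale part, and $d\mathcal{X}=A\mathcal{X}\,dt+\sum_k B_k\mathcal{X}\,dW_k$ holds precisely when $\mathcal{U}$ solves the pathwise, non-autonomous linear equation
\[
\frac{d}{dt}\,\mathcal{U}(s,t)=\mathcal{A}(s,t)\,\mathcal{U}(s,t),\qquad \mathcal{U}(s,s)=I,\qquad \mathcal{A}(s,t):=\mathcal{Y}(s,t)^{-1}A\,\mathcal{Y}(s,t).
\]
Thus the whole problem is shifted onto the deterministic (for fixed $\omega$) Cauchy problem for $\mathcal{U}$. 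The point of conjugating by $\mathcal{Y}$ is that, for each fixed $t$, $\mathcal{A}(s,t)$ is a similarity transform of $A$ and so generates the $C_0$-semigroup $\tau\mapsto\mathcal{Y}(s,t)^{-1}e^{\tau A}\mathcal{Y}(s,t)$; since $t\mapsto\mathcal{Y}(s,t)^{\pm1}$ is a.s. continuous on $[s,T]$ with $\mathcal{Y}(s,s)=I$, these semigroups are uniformly bounded in $t$ on $[s,T]$, so $\{\mathcal{A}(s,t)\}_t$ is a stable family of generators in the sense of Kato.

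I would then construct the evolution operator $\mathcal{U}(s,t)$ pathwise, either by Kato's theory of hyperbolic non-autonomous Cauchy problems or, more concretely, by replacing $A$ with its Yosida approximation $A_n$ inside the conjugation, solving the resulting bounded-coefficient equation for $\mathcal{U}_n(s,t)$ by Picard iteration, and passing to the limit. The uniform bound $\sup_n\sup_{0\le\tau\le T}\|e^{\tau A_n}\|<\infty$ together with the a.s. bounds on $\|\mathcal{Y}(s,t)^{\pm1}\|$ yields estimates on $\mathcal{U}_n(s,t)$ that are \emph{uniform in} $n$ (coming from the semigroup of $A$, not from the operator norm of the generator), and a Gronwall argument for $\mathcal{U}_n-\mathcal{U}_m$ tested against a fixed $x$ gives convergence; measurability in $\omega$ and the moment bounds inherited from \eqref{E22}--\eqref{E24} are then routine. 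Once $\mathcal{U}(s,t)\in L(H,H)$ is in hand, $\mathcal{X}(s,t):=\mathcal{Y}(s,t)\mathcal{U}(s,t)$ is a genuine bounded operator; the It\^o computation above shows $\mathcal{X}(s,t)x$ is a mild solution of \eqref{E11}, whence $\mathcal{X}(s,t)x=X^x_s(t)$ by uniqueness (property (ii)), property (i) follows from the a.s. continuity of $\mathcal{Y}$ and $\mathcal{U}$, and the cocycle property (iii) follows from uniqueness of solutions exactly as noted in the Remark.

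The main obstacle is the middle step: solving the random non-autonomous equation for $\mathcal{U}$. The difficulty is that the domains $\mathcal{Y}(s,t)^{-1}\mathrm{Dom}(A)$ genuinely move with $t$, and $t\mapsto\mathcal{Y}(s,t)$ is only H\"older (in particular nowhere differentiable and not of bounded variation), so the smooth-coefficient theory does not apply directly and the naive bound on $\|\mathcal{A}(s,t)\|$ is infinite. Everything hinges on exploiting the similarity structure to extract stability and uniform-in-$n$ bounds from the semigroup of $A$ alone; controlling the interplay between the moving domains and the low time-regularity of $\mathcal{Y}$ is where the real work lies, together with a separability/diagonal argument to upgrade the $x$-wise convergence of $\mathcal{U}_n$ to a bona fide operator limit.
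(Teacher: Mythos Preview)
Your proposal is correct and takes essentially the same approach as the paper: the Doss--Sussman factorization $\mathcal{X}=\mathcal{Y}\,\mathcal{U}$, with $\mathcal{Y}$ the flow from Theorem~\ref{T21} and $\mathcal{U}$ the evolution operator of the conjugated random non-autonomous Cauchy problem, constructed via Yosida approximation of $A$ and a strong limit (the paper dispatches this step by citing Tanabe). The only cosmetic difference is that the paper takes $B_0=\tfrac12\sum_k B_k^2$ rather than $B_0=0$ in the equation defining $\mathcal{Y}$, which merely shifts a bounded drift between the two factors.
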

\begin{theorem}\label{T21c}
Assume that $B_0, B_1,\ldots, B_N$ is a finite sequence of commuting generators of $C_0$-groups on $H$. Then the equation 
$$
\d X= B_0X\d t + \sum_{k=1}^N B_kX\d_S W_k,\quad X(s)=x
$$
considered in the Stratonovich sense defines stochastic flow on $H$. Moreover, 
$$
\mathcal{X}(s,t)= \exp\left\{(t-s) B_0 + \sum_{k=1}^N \left(W_k(t)-W_k(s)\right)B_k\right\}. 
$$
\end{theorem}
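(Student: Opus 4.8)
The plan is to write down the candidate flow explicitly and verify the three defining properties, using commutativity to collapse everything to scalar-like computations. Since $B_0,\dots,B_N$ are commuting generators of $C_0$-groups, the one-parameter groups $\left(\e^{uB_k}\right)_{u\in\R}$ commute pairwise, so I would set
\[
\mathcal{X}(s,t):=\e^{(t-s)B_0}\,\prod_{k=1}^N \e^{(W_k(t)-W_k(s))B_k}.
\]
The product is independent of the order of its factors, each factor is a bounded operator, and the group bound $\|\e^{uB_k}\|\le M_k\e^{\omega_k|u|}$ gives $\mathcal{X}(s,t)\in L(H,H)$ with norm controlled by $\exp\{\omega_0(t-s)+\sum_k\omega_k|W_k(t)-W_k(s)|\}$ up to a constant. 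Because commuting group elements add exponents, this product coincides with the claimed $\exp\{(t-s)B_0+\sum_k(W_k(t)-W_k(s))B_k\}$ read as a commuting product, and the moment bounds needed later follow from the Gaussian moments of the increments $W_k(t)-W_k(s)$.

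Property (iii) I would obtain at once from commutativity: writing $V(t,r):=(r-t)B_0+\sum_k(W_k(r)-W_k(t))B_k$, both $V(t,r)$ and $V(s,t)$ are real linear combinations of the commuting generators, so the factors combine one by one, $\e^{(r-t)B_0}\e^{(t-s)B_0}=\e^{(r-s)B_0}$ and $\e^{(W_k(r)-W_k(t))B_k}\e^{(W_k(t)-W_k(s))B_k}=\e^{(W_k(r)-W_k(s))B_k}$, yielding $\mathcal{X}(t,r)\mathcal{X}(s,t)=\mathcal{X}(s,r)$ for every $\omega$. Property (i), strong continuity of $t\mapsto\mathcal{X}(s,t)x$, follows since each group is strongly continuous and $t\mapsto W_k(t)$ is continuous, so the finite product depends continuously on $t$ in the strong operator topology.

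The substance is property (ii): that $X(t):=\mathcal{X}(s,t)x$ solves the Stratonovich equation. I would first pass to the equivalent It\^o equation
\[
\d X=\Big(B_0+\tfrac12\sum_{k=1}^N B_k^2\Big)X\,\d t+\sum_{k=1}^N B_kX\,\d W_k,
\]
the drift correction $\tfrac12\sum_k B_k^2$ being the usual It\^o--Stratonovich term for the linear coefficients $\sigma_k(x)=B_kx$. Setting $V(t)=(t-s)B_0+\sum_k(W_k(t)-W_k(s))B_k$ and $X(t)=\exp(V(t))x$, I would then apply It\^o's formula to the exponential. Here commutativity is decisive: since $V(t)$ commutes with each $B_k$, hence with $\d V(t)=B_0\,\d t+\sum_k B_k\,\d W_k$, and the quadratic variation is $(\d V(t))^2=\sum_k B_k^2\,\d t$, It\^o's formula takes the scalar form
\[
\d\,\exp(V(t))=\exp(V(t))\big[\d V(t)+\tfrac12(\d V(t))^2\big],
\]
with no commutator corrections, and commuting $\exp(V(t))$ past the $B_k$ reproduces exactly the It\^o equation above.

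The hard part will be carrying out this computation rigorously when the $B_k$ are unbounded. To control domains I would restrict to $x$ in the common smooth core $\mathcal{D}:=\bigcap_{k,m}\mathrm{Dom}(B_k^m)$, which is dense and left invariant by every $\e^{uB_k}$ (the resolvents commute), so that $B_k^2X(t)$ and the stochastic integrals are well defined and It\^o's formula is legitimate; alternatively, I would replace each $B_k$ by a bounded Yosida-type approximant, verify the formula in the classical bounded commuting case, and pass to the limit using strong convergence of the approximants together with the uniform moment bound. The identity $\mathcal{X}(s,t)x=X_s^x(t)$ would then extend from $x\in\mathcal{D}$ to all $x\in H$ by density, using the $L^2$-continuity of both sides guaranteed by the norm estimate from the first step. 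I expect the principal obstacle to be the domain bookkeeping in the It\^o's-formula step---ensuring that $\exp(V(t))$ preserves $\mathrm{Dom}(B_k^2)$ and that the second-order term produces precisely the Stratonovich correction $\tfrac12\sum_k B_k^2X$.
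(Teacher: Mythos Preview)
The paper does not actually prove Theorem~\ref{T21c}: its Section~2.4 consists of the single sentence ``This part is well known. We present it only for complete presentation.'' So there is nothing to compare against beyond the implicit claim that the standard argument works.

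Your proposal \emph{is} the standard argument and is correct in outline. Defining $\mathcal{X}(s,t)$ as the commuting product $\e^{(t-s)B_0}\prod_k \e^{(W_k(t)-W_k(s))B_k}$ is the right interpretation of the exponential in the statement (for unbounded commuting generators the sum in the exponent is only formal), and properties~(i) and~(iii) follow exactly as you say. The It\^o--Stratonovich conversion and the scalar-form It\^o formula for $\exp(V(t))$ are the heart of the matter, and commutativity of $V(t)$ with $\d V(t)$ is precisely what kills the commutator terms. Your diagnosis that the only genuine work is domain bookkeeping is accurate; either route you suggest (dense invariant core $\bigcap_{k,m}\mathrm{Dom}(B_k^m)$, or bounded approximants via Yosida/spectral truncation followed by a limit) is standard and will close the argument. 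One small point: when you pass to the limit from the approximants, make sure the notion of ``commuting'' you invoke for unbounded $B_k$ is commutation of the resolvents (equivalently, of the groups), since that is what guarantees the approximants remain commuting and that the limit product is order-independent.
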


\subsection{Proof of Theorem \ref{T21}} Write  $W_0(t)\equiv t$. Then \eqref{E21}  can be written as follows 
$$
\d Y= \sum_{k= 0}^{+\infty} B_kY\d W_k, \qquad Y(s)=x. 
$$
Let 
$$
\Theta:= \bigcup_{n=1}^{+\infty} \{0,1, \ldots\}^n. 
$$
Given $\alpha \in \{0,1,\ldots\}^n$ we set $\vert \alpha \vert =n$ and  $B^\alpha := B_{\alpha _1}B_{\alpha_2}\ldots B_{\alpha_n}$.  Let us fix an $s\ge 0$. We define by induction the iterated stochastic integrals. If $\vert \alpha \vert = 1$ and $\alpha = (k):= k$, $k=0,1,\ldots$, then $I_k(t)=W_k(t)-W_k(s)$. For $\alpha \in \{0,1,\ldots\}^n$ we set  
$$
I_\alpha(s,t):= \int_s^t I_{\widehat{\alpha }}(s,r)\d W_{\alpha_1}(r),
$$
where ${\widehat{\alpha }}:=(\alpha _2,\ldots, \alpha_n)$. Then the solution $Y^x_s$ to \eqref{E21} is given by 
$$
Y^x_s(t) = x+ \sum_{n=1}^{+\infty} \sum_{\alpha\colon \vert \alpha \vert =n}^{+\infty} I_\alpha(s,t)B^\alpha x. 
$$
Therefore the corresponding flow $\mathcal{Y}$ exists and 
$$
\mathcal{Y}(s,t;\omega)= I+ \sum_{n=1}^{+\infty} \sum_{\alpha\colon \vert \alpha \vert =n}^{+\infty} I_\alpha(s,t;\omega)B^\alpha  
$$
provided the series 
$$
\sum_{n=1}^{+\infty} \sum_{\alpha\colon \vert \alpha \vert =n}^{+\infty} I_\alpha(s,t;\omega)B^\alpha 
$$
converges $\mathbb{P}$-a.s in $L(H,H)$.  We have 
$$
\sum_{n=1}^{+\infty} \sum_{\alpha\colon \vert \alpha \vert =n}^{+\infty} \| I_\alpha(s,t;\omega)B^\alpha \|\le  \xi(s, t;\omega),
$$
where 
$$
\xi(s,t;\omega) := \sum_{n=1}^{+\infty}  \sum_{\vert \alpha \vert =n}^{+\infty} \left\vert I_\alpha (s, t;\omega)\right\vert \, \|B^\alpha\|.
$$
Therefore the stochastic flow exists if  $\xi(s,t)<+\infty $, $\mathbb{P}$-a.s. Moreover, $\left\|\mathcal{Y}(s,t;\omega)\right\| \le 1+ \xi(s,t;\omega)$. In fact we will show that for any $T\in (0,+\infty)$ and $L=1,2,3, \ldots $,  we have 
\begin{equation}\label{E26}
\sup_{0\le s \le t\le T} \mathbb{E}\, \xi^{2L}(s,t) < +\infty,
\end{equation}
which obviously guarantees  \eqref{E22}. To this end note that for $\alpha \colon \vert \alpha \vert=n \ge 1$ we have 
$$
\mathbb{E} \left\vert I_\alpha (s,t)\right\vert ^{2L}\le  C_L^{2Ln} \frac {\max\{ (1,  (t-s)^{2Ln}\}}{n!}(t-s)^{L-1}=:\eta_{n,L}(s,t), 
$$
where $C_L:=   \frac{2L}{2L-1}$.

Let $\alpha^1, \alpha^2,\ldots, \alpha^{2L} \in \Theta$.  By the Schwarz inequality we have 
$$
\mathbb{E} \prod_{i=1}^{2L} \left\vert  I_{\alpha^i} (s,t) \right\vert\le \prod_{i=1}^{2L} \left( \mathbb{E} \left\vert  I_{\alpha^i}(s,t)\right\vert^{2L}\right)^{1/(2L)}. 
$$
Therefore 
\begin{align*}
\mathbb{E}\, \xi^{2L}(s,t)& = \mathbb{E}\left(  \sum_{n=1}^{+\infty} \sum_{\alpha\colon \vert \alpha \vert =n}^{+\infty} \left\vert I_\alpha(s,t)\right\vert  \| B^\alpha\|\right)^{2L}\\
&\le  \sum_{n_1, \ldots n_{2L}=1}^{+\infty} \sum_{\alpha^i \colon \vert \alpha^i \vert =n_i}  \prod _{i=1}^{2L} \|B^{\alpha ^i} \|\,  \mathbb{E} \prod_{i=1}^{2L} \left\vert  I_{\alpha^i}(s,t)\right\vert\\
&\le \left( \sum_{n=1}^{+\infty} \sum_{\alpha \colon \vert \alpha \vert =n}^{+\infty} \|B^\alpha \|  \left( \mathbb{E} \left\vert  I_{\alpha}(s,t)\right\vert^{2L}\right)^{1/(2L)}\right)^{2L}
\\
&\le  \left( \sum_{n=1}^{+\infty} \left(\eta _{n,L}(s,t)\right)^{1/(2L)} \sum_{\alpha \colon \vert \alpha \vert =n}^{+\infty} \|B^\alpha \| \right)^{2L}
\end{align*}
Since 
\begin{align*}
\sum_{\alpha \colon \vert \alpha \vert =n}^{+\infty} \|B^\alpha \| &\le \sum_{\alpha \colon \vert \alpha \vert =n}^{+\infty} \|B_{\alpha_1}\|\|B_{\alpha _2}\|\ldots \|B_{\alpha _n}\| \\
&\le \left( \sum_{k=1}^{+\infty} \|B_k\|\right)^n=:M^n,
\end{align*}
we have 
\begin{align*}
\left( \mathbb{E}\, \xi^{2L}(s,t)\right)^{1/(2L)}  &\le \sum_{n=1}^{+\infty} M^n \left(\eta _{n,L}(s,t)\right)^{1/(2L)}\\
&\le  \sum_{n=1}^{+\infty} M^n C_L^{n} \left( \frac {\max\{ (1,  (t-s)^{2Ln}\}}{n!}\right)^{1/(2L)} (t-s)^{\frac 12 -\frac{1}{2L}} <+\infty,
\end{align*}
which gives  \eqref{E26}.  We will show  \eqref{E23} using calculations from the proof of \eqref{E26}. Namely, we have 
\begin{align*}
&\mathbb{E}\left \| \mathcal{Y}(s,t)- \mathcal{Y}(s,t+h)\right\|^{2L} = \mathbb{E}\left\| \sum_{n=1}^{+\infty} \sum_{\alpha\colon \vert \alpha \vert =n}^{+\infty} \left[ I_\alpha(s,t)- I_\alpha (s,t+h)\right] B^\alpha \right\|^{2L} \\
&\le  \left( \sum_{n=1}^{+\infty} \sum_{\alpha\colon \vert \alpha \vert =n}^{+\infty} \|B^\alpha \| \left( \mathbb{E} \left\vert  I_\alpha(s,t)- I_\alpha (s,t+h)\right\vert ^{2L}\right)^{1/(2L)}\right)^{2L}.
\end{align*}
Since 
$$
\mathbb{E}  \left\vert  I_\alpha(s,t)- I_\alpha (s,t+h)\right\vert^{2L} = \mathbb{E} \left\vert  I_\alpha(0,h)\right\vert^{2L},
$$
and \eqref{E26} holds,  for $0\le s\le t\le u\le T$ we have 
\begin{align*}
\mathbb{E}\left \| \mathcal{Y}(s,t)- \mathcal{Y}(s,u)\right\|^{2L} &= \mathbb{E}\left \| \mathcal{Y}(0,0)- \mathcal{Y}(0,u-t)\right\|^{2L}\\
&=  \mathbb{E}\, \xi^{2L}(0,u-t)\\
&\le  \left( \sum_{n=1}^{+\infty}  \left(MC_L\right)^{n}\left( \frac{\max\{1, T^{2Ln}\}}{n!}\right)^{1/(2L)} \right)^{2L} (u-t)^{L-1}. \qquad \square
\end{align*}

We are showing now \eqref{E24}. We can assume that $0\le s \le u$. Then there are three cases $(i)\colon s\le t\le u\le v$, $(ii)\colon s\le u\le t\le v$ and $(iii) \colon s\le u\le v\le t$. The first case  follows from \eqref{E23} by elementary calculations. For,  we have 
\begin{align*}
\mathbb{E}\, \|\mathcal{Y}(s,t)-\mathcal{Y}(u,v)\|^{2L} &\le 2^{2L}\left( \mathbb{E}\, \|\mathcal{Y}(s,t)\|^{2L} + \mathbb{E}\,  \|\mathcal{Y}(u,v)\|^{2L}\right)\\
&\le C\left( \vert t-s\vert ^{L-1}+ \vert v-u\vert ^{L-1}\right)\\
&\le C \left( \vert t-s\vert + \vert v-u\vert \right)^{L-1}= C \left( t-s +  v-u \right)^{L-1}\\
&\le C \left( v-t +  u-s \right)^{L-1}= C \left( \vert t-v\vert + \vert s-u \vert \right)^{L-1}. 
\end{align*}
The cases $(ii)$ and $(iii)$ follow also from \eqref{E22}, \eqref{E23},  and the flow property of $\mathcal Y$. Indeed consider $(ii)$. Then 
\begin{align*}
\|\mathcal{Y}(s,t)-\mathcal{Y}(u,v)\|&= \|\mathcal{Y}(u,t)\circ \mathcal{Y}(s,u)-\mathcal{Y}(t,v)\circ \mathcal{Y}(u,t)\|\\
&=\left\|\left( \mathcal{Y}(u,t) \circ  (\mathcal{Y}(s,u)- I\right) - \left(  \mathcal{Y}(t,v)-I\right) \circ \mathcal{Y}(u,t) \right\|\\
&\le \left\|\mathcal{Y}(u,t)\right\|  \left\|\mathcal{Y}(s,u)- \mathcal{Y}(s,s)\right\|  +  \left\|  \mathcal{Y}(t,v)-\mathcal{Y}(t,t)\right\| \left\|\mathcal{Y}(u,t)\right\| . 
\end{align*}
Therefore applying  the Schwarz inequality, and \eqref{E22} and \eqref{E23} for $4L$,  we can find a constant $C$ such that 
\begin{align*}
\mathbb{E}\, \|\mathcal{Y}(s,t)-\mathcal{Y}(u,v)\|^{2L} &\le C\left( |s-u|^{2L-1} + |t-v|^{2L-1}\right)^{1/2}\le  C'\left( |s-u| + |t-v|\right)^{L-1}. 
\end{align*}
Finally in  the last case (case $(iii)$), we have 
\begin{align*}
\|\mathcal{Y}(s,t)-\mathcal{Y}(u,v)\|&= \|\mathcal{Y}(v,t)\circ \mathcal{Y}(u,v)\circ \mathcal{Y}(s,u)-\mathcal{Y}(u,v)\|\\
&=\left\|  \mathcal{Y}(v,t)\circ \mathcal{Y}(u,v)\circ \left( \mathcal{Y}(s,u)- I\right)+ \left(  \mathcal{Y}(v,t)-I\right)\circ \mathcal{Y}(u,v) \right\|\\
&\le \left\|\mathcal{Y}(v,t)\right\|  \left\|\mathcal{Y}(u,v)\right\|  \left\|\mathcal{Y}(s,u)- \mathcal{Y}(s,s)\right\| +   \left\|  \mathcal{Y}(v,t)-\mathcal{Y}(v,v)\right\| \left\|\mathcal{Y}(u,v)\right\|. 
\end{align*}
Therefore applying  the Schwarz inequality, and \eqref{E22} and \eqref{E23} for $4L$ and $8L$, respectively,  we can find a constant $C$ such that 
\begin{align*}
\mathbb{E}\, \|\mathcal{Y}(s,t)-\mathcal{Y}(u,v)\|^{2L} &\le C\left( |s-u|^{2L-1} + |t-v|^{2L-1}\right)^{1/2}\le  C'\left( |s-u| + |t-v|\right)^{L-1}. 
\end{align*}
 $\square$.

\subsection{Proof of Theorem \ref{T21a}} From the first part we can easily deduce  the invertibility of $\mathcal{Y}(s,t)$. For, let us fix $0\le s\le t<+\infty$. Consider, the  partition $s=t^n_0< t^n_1<\ldots t_n^n=t$ such that $t^n_{k+1}-t^n_k = \frac{t-s}{n}$. From the H\"older continuity of $\mathcal{Y}$ it follows that with probability $1$, for any $\omega\in \Omega$, there is an $n(\omega)$ such that 
$$
\left\| \mathcal{Y}(t^n_k,t^{n}_{k+1};\omega)- \mathcal{Y}(t^n_{k}, t^n_k; \omega)\right\|\le \frac 12. 
$$
Hence, since $\mathcal{Y}(t^n_{k}, t^n_k; \omega)=I$, each $\mathcal{Y}(t^n_k,t^{n}_{k+1};\omega)$ is invertible. Since 
$$
\mathcal{Y}(s,t;\omega)=  \mathcal{Y}(t^n_{n-1},t^{n}_n;\omega)\circ \mathcal{Y}(t^n_{n-2},t^{n}_{n-1};\omega)\circ \ldots \circ \mathcal{Y}(t^n_{0},t^{n}_{1};\omega), 
$$
the invertibility of $\mathcal{Y}(s,t;\omega)$ follows.  

From the first part it also follows that \eqref{E25} defines stochastic flow $\mathcal{Z}$. We will first show  that  $\mathcal{Z}^\star(s,t) \mathcal{Y}(s,t)=I$. To this end note that for all $z,y\in H$, and $0\le s \le t$, we have 
\begin{align*}
\langle \mathcal{Z}^\star(s,t) \mathcal{Y}(s,t)y,z\rangle _H&= \langle  \mathcal{Y}(s,t)y, \mathcal{Z}(s,t)z\rangle_H = \langle Y^y_s(t), Z^z_s(t)\rangle_H.
\end{align*}
Next 
\begin{align*}
\d \langle Y^y_s(t), Z^z_s(t)\rangle_H&=  \langle B_0 Y^y_s(t), Z^z_s(t)\rangle _H\d t\\
&\quad + \langle  Y^y_s(t), (- B_0^\star +\tilde B^\star)Z^z_s(t)\rangle _H\d t   -\sum_k \langle B_kY^y_s(t), B_k^\star Z^z_s(t)\rangle  _H\d t \\
&\quad + \sum_{k} \langle B_kY^y_s(t), Z^z_s(t)\rangle _H\d W_k(t) -\sum_{k}\langle Y^y_s(t), B^\star_k Z^z_s(t)\rangle _H \d W_k(t)\\
&=0. 
\end{align*}
Hence for all $y,z\in H$ and $0\le s\le t$, 
$$
\langle \mathcal{Z}^\star(s,t) \mathcal{Y}(s,t)y,z\rangle _H= \langle y,z\rangle _H,
$$
and therefore $\mathcal{Z}^\star (s,t)\mathcal{Y} (s,t)=I$, $\mathbb{P}$-a.s.  Since $\mathcal{Y}(s,t)$ is invertible, $\mathcal{Z}^\star (s,t)= \left(\mathcal{Y}(s,t)\right)^{-1}$ as required. $\square$

\subsection{Proof of Theorem \ref{T21b}}  Recall that $(A,D(A))$ is the generator of a $C_0$-semigroup $\e^{tA}$, $t\ge 0$, on a Hilbert space $H$.  Let $A_\lambda= \lambda A(\lambda I -A)^{-1}$ be the Yosida approximation of $A$.  Consider the following approximation of \eqref{E11}, 
\begin{equation}\label{E27}
\d X=A_\lambda X\d t+\sum_{k}  B_kX\d W_k,\quad X(s)=x\in H. 
\end{equation}
Let $X^{x,\lambda}_s$ be the solution to \eqref{E27}. Recall that $X^x_s$ is the solution to \eqref{E11}.  It is easy to show that 
\begin{equation}\label{E28}
\lim_{\lambda\to +\infty}\mathbb{E}\left\vert X^{x,\lambda}_s(t)-X^x_s(t)\right\vert ^2_H=0, \qquad \forall\, t\ge s. 
\end{equation}
Define $ B_0 =\frac 12 \sum_{k=1}^{+\infty} B_k^2$.  Consider the following linear stochastic differential equation. 
\begin{equation}\label{E29}
\d Y= B_0Y \d t+\sum_{k} B_kY\d W_k,\qquad Y(s)=x\in H.
\end{equation}
From the first part of the theorem we know that \eqref{E27} and \eqref{E29} define stochastic flows, $\mathcal{X}_\lambda$ and $\mathcal{Y}$, respectively. Consider the following equation with random coefficients 
\begin{equation}\label{E210}
\d \mathcal{G}(t)= \mathcal{Y}(s,t)^{-1}\left(A_\lambda - B_0 \right)\mathcal{Y}(s,t)\mathcal{G}(t)\d t, \quad \mathcal{G}(s)=I.
\end{equation}
Taking into account H\"older continuity of $\mathcal{Y}$ one can see that the solution $\mathcal{G}_\lambda$ exists and 
$$
\mathcal{G}_\lambda(t,s)= \exp\left\{\int_s^t \mathcal{Y}(s,r)^{-1}\left(A_\lambda - B_0 \right)\mathcal{Y}(s,r)\d r\right\}.
$$
Moreover, see \cite{Tanabe}, there is a strongly continuous two parameter evolution system $\mathcal{G}(s,t)$ of bounded linear operators on $H$, such that for any $x\in H$, 
\begin{equation}\label{E211}
\lim_{\lambda \to +\infty} \mathcal{G}_\lambda(s,t)x= \mathcal{G}(s,t)x, \qquad \mathbb{P}-a.s. 
\end{equation}

We will show that  \eqref{E11} defines stochastic flow and $\mathcal {X}(s, t)= \mathcal{Y}(s,t) \mathcal{G}(s,t)$. Taking into account \eqref{E28} and \eqref{E211}  it is enough to show that  $X^{x,\lambda}_s(t)= \mathcal{Y}(s,t) \mathcal{G}_\lambda (s,t)x$, $t\ge s$, that is   that $\mathcal{Y}(s,t) \mathcal{G}_\lambda (s,t)x$, $t\ge s$, solves \eqref{E27}.  Clearly $\mathcal{Y}(s,s) \mathcal{G}_\lambda (s,s)x=x$. Next, for any $y\in H$ we have 
$$
\d \mathcal{Y}^\star(s,t)y= \mathcal{Y}^\star (s,t)B_0^\star y \d t + \sum_{k=1}^{+\infty} \mathcal{Y}^\star (s,t)B_k^\star y\d W_k(t).
$$
Therefore 
\begin{align*}
\d\left  \langle \mathcal{G}_\lambda (s,t)x, \mathcal{Y}^\star(s,t) y\right \rangle_H &= \left\langle \mathcal{Y}(s,t)^{-1}\left(A_\lambda - B_0 \right)\mathcal{Y}(s,t)\mathcal{G}_\lambda (s, t)x, \mathcal{Y}^\star(s,t) y\right \rangle_H \\
&\quad + \left\langle \mathcal{G}_\lambda (s,t)x, \mathcal{Y}^\star (s,t)B_0^\star y \right\rangle _H \d t\\
&\quad + \sum_{k=1}^{+\infty} \left\langle \mathcal{G}_\lambda (s,t)x,  \mathcal{Y}^\star (s,t)B_k^\star y\right\rangle _H\d W_k(t), 
\end{align*}
and consequently we have the desired conclusion 
$$
\d \mathcal{Y}(s,t) \mathcal{G}_\lambda (s,t)x=  A_\lambda  \mathcal{Y}(s,t) \mathcal{G}_\lambda (s,t)x\d t + \sum_{k=1}^{+\infty} B_k \mathcal{Y}(s,t) \mathcal{G}_\lambda(s,t)x\d W_k (t).  \qquad \square
$$

\subsection{Proof of Theorem \ref{T21c}} This part is well known. We present it only for complete presentation. 

\begin{remark}\label{R22}
{\rm The trick used in the proof of Theorem \ref{T21b} is well-known in finite dimensional case and in some infinite dimensional cases and is known as the Doss--Sussman transformation. 

There are important and interesting questions: $(i)$  whether the flow $\mathcal{X}$ defined by \eqref{E11} is H\"older continuous $L(H,H)$-valued mapping of $(s,t)\in \Delta$, $(ii)$ if the  flow is invertible. Clearly, see the proof of Theorem \ref{T21}(ii),  $(i)$ implies $(ii)$.  Unfortunately, if $A$ is unbounded it is probably impossible that the flow is continuous in the operator norm. Let us recall that the semigroup $\e^{tA}$ generated by $A$ is continuous in the operator norm if and only if $A$ is bounded. In the case when $\e^{tA}$, $t>0$,  are compact it is however possible that $\e^{tA}$ is continuous in the operator norm for $t>0$. Therefore, we can expect that in some cases the flow is continuous in the operator norm on the  open set $0<s<t<+\infty$, see Section \ref{SSchatten}.   Finally note that  if the flow $\mathcal{X}$ is invertible, than formally $\mathcal{Z}= \left(\mathcal{X}^{-1}\right)^*$ is the flow for the equation 
$$
\d Z= \left(-A^\star +\sum_{k} \left(B_k^2\right)^\star\right)Z\d t-\sum_{k} B_k^\star Z\d W_k. 
$$
Unfortunately, only in the case when $A$ generates a group,  $-A^\star$ generates a $C_0$-semigroup, and therefore the equation is often ill posed.}
\end{remark}
\begin{example}\label{ex_llg}
This example is an extension of a model that is important in the theory of random evolution of spins in ferromagnetic materials, see \cite{ggn}.

Let $\mathcal{O}\subset\R^3$ be a bounded domain. For a sequence $g_k\in C\la\mathcal{O},\R^3\ra$ we define operators 
$$
B_k\colon L^2\la\mathcal{O};\R^3\ra\to L^2\la\mathcal{O};\R^3\ra,\quad B_kx(\xi)=x(\xi)\times g_k\la\xi\ra,\quad \xi\in\mathcal{O}\,,
$$
where $\times$ stands for vector product in $\R^3$.  It is easy to see that  each $B_k$ is skew-symmetric, that is $B_k^\ast=-B_k$. 
Assume that $\sum_k \|g_k\|_{\infty}<+\infty$, hence $\sum_{k} \|B_k\|<+\infty$. Then, by Theorem \ref{T21},  the stochastic differential equation 
$$
\d Y=\sum_{k=1}^{+\infty}B_kY\d W_k,
$$
defines stochastic flow ${\mathcal Y}$ on $\bL$ and, by Theorem \ref{T21b}, $\mathcal{Z}(s,t) = \left(\mathcal{Y}^{-1}(s,t)\right)^\star $ is the flow defined by  the equation  
$$
\d Z= \tilde B Z\d t + \sum_{k} B_k  Z \d W_k. 
$$
where  $\tilde B= \sum_{k\ge 1}B_k^2$. 
\end{example}
\section{Nonlinear case}
The method based on the Doss--Susmann transformation can be generalized to some non-linear equations, see \cite{Flandoli-Lisei, Imkeller-Lederer1, Imkeller-Lederer2}. Assume that  $B_0,\ldots B_N \in C^1(\mathbb{R})$ is a finite sequence of  functions with bounded derivatives. Let $\mathcal{O}$ be an open subset of $\mathbb{R}^d$ and let $(A,D(A))$ be the generator of a $C_0$-semigroup on $H=L^2(\mathcal{O})$. Consider the following SPDE and SDE in the Stratonovich sense 
\begin{equation}\label{EN1}
\d X= \left[ AX+ B_0(X)\right]\d t +\sum_{k=1}^N B_k(X)\d_S W_k,\qquad X(s)=x\in L^2(\mathcal{O}), 
\end{equation}
\begin{equation}\label{EN2}
\d y=   B_0(y)\d t +\sum_{k=1}^N B_k(y)\d_S W_k(t),\qquad y(s)=\xi\in \mathbb{R}. 
\end{equation}
By the classical theory of SDEs, see e.g. \cite{Ikeda-Watanabe, Kunita},  \eqref{EN2} defines stochastic flow $\eta(s,t)$, $0\le s\le t$, of diffeomorphism  of $\mathbb{R}$.  Consider now the following stochastic evolution equation on the Hilbert space $L^2(\mathcal{O})$, 
\begin{equation}\label{EN3}
\d Y=  B_0(Y)\d t +\sum_{k=1}^N B_k(Y)\d_S W_k,\qquad Y(s)=x\in L^2(\mathcal{O}). 
\end{equation}
Then \eqref{EN3} defines stochastic flow $\mathcal{Y}$ on $L^2(\mathcal{O})$ and 
$$
\mathcal{Y}(s,t)(x)(\xi)= \eta(s,t)(x(\xi)), \qquad x\in L^2(\mathcal{O}), \quad \xi\in \mathcal{O}. 
$$
Let $G_s^x(t)$ be the solution to  the evolution equation with random coefficients 
$$
\frac{\d }{\d t}  G_s^x(t)= \mathcal{U}(s,t ,G_s^x(t)), \qquad G_s^x(s)=x, 
$$
where 
$$
\mathcal{U}(s,t,y)= \left[ D\mathcal{Y}(s,t)(y)\right]^{-1} A \mathcal{Y}(s,t)(y).  
$$
Above, $D\mathcal{Y}(s,t)(y)$ is the derivative with respect to initial condition $y$. Clearly we need to assume that $\mathcal{Y}(s,t)(G_s^x(t))$ is in the domain of $A$, and $D\mathcal{Y}(s,t)(G_s^x(t))$ is an invertible operator. 
Then by the  It\^o--Vencel  formula 
$$
X^x_s(t)= \mathcal{Y}(s,t)(G^x_s(t)). 
$$
In \cite{Flandoli-Lisei}, this method was applied to equations of the type 
$$
\d X= \mathcal{A}(X)\d t + \sum_{k=1}^N B_k X\d _S W_k, 
$$
where $\mathcal{A}$ is a monotone operator, and $(B_k)$ are first order differential linear operators.

\section{Diagonal and commutative case}\label{SDiagonal}
In this section we will first consider  an  extension of  the Skorokhod example \cite{skorokhod}.  Let $(e_k)$ be an orthonormal basis of an infinite-dimensional Hilbert space $H$ and let $(\sigma_k)$ and $(\alpha_k)$ be sequences of real numbers. We assume that $\sigma_k\ge 0$. For every $k\ge 1$ we define bounded linear operators
$B_k  = \sigma _k e_k\otimes e_k$,  and a possibly unbounded operator
$$
A= \sum_{j=1}^{+\infty} \alpha_j e_j\otimes e_j
$$
with the domain 
$$
D(A)=\left\{ x\in H\colon \sum_{j=1}^{+\infty} \alpha _j^2 \langle x,e_j\rangle ^2_H<+\infty\right\}. 
$$
Let us recall that in the Skorokhod example $\alpha_k=0$, $k=1,2,\ldots$, and $\sigma_k=\sigma$ does not depend on $k$. 
\begin{proposition}\label{P31}
Consider  \eqref{E11} with $A, B_1, \ldots $ as above. Then the following holds.\\
$(i)$ For each initial value $x\in H$, \eqref{E11} has  a  square integrable solution  $X^x_s$  in  $H$ if and only if
\begin{equation}\label{E31}
\sup_{k} \left(2\alpha _k + \sigma_k^2\right)<+\infty. 
\end{equation}
$(ii)$ Assume \eqref{E31}. Then \eqref{E11}  defines stochastic flow $\mathcal{X}$  if and only if  
\begin{equation}\label{E32}
\rho(s,t):= \sup_{k} \left\{\left[ \alpha _k -\frac{ \sigma_k^2}{2}\right] \sqrt{t-s} + \sigma_k \sqrt{2\log k}\right)<+\infty, \quad \forall\, 0\le s<t. 
\end{equation}
\end{proposition}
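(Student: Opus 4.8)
The plan is to exploit the fact that $A$ and all the $B_k$ are simultaneously diagonalized by the basis $(e_k)$, so that \eqref{E11} decouples into a countable family of independent scalar equations. Writing $x_k=\langle x,e_k\rangle_H$ and projecting onto $e_k$, the $k$-th coordinate $\xi_k(t)=\langle X^x_s(t),e_k\rangle_H$ solves the geometric Brownian motion
\[
\d\xi_k=\alpha_k\xi_k\,\d t+\sigma_k\xi_k\,\d W_k,\qquad \xi_k(s)=x_k,
\]
whose unique solution is
\[
\xi_k(t)=x_k\lambda_k(s,t),\qquad \lambda_k(s,t):=\exp\Big\{\big(\alpha_k-\tfrac12\sigma_k^2\big)(t-s)+\sigma_k\big(W_k(t)-W_k(s)\big)\Big\}.
\]
Since the increments $W_k(t)-W_k(s)$ are independent $N(0,t-s)$, everything below is a statement about independent log-normal multipliers, and the only candidate flow is the diagonal operator $\mathcal X(s,t)=\sum_k\lambda_k(s,t)\,e_k\otimes e_k$. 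Note that the hypothesis $\sum_k\|B_k\|<+\infty$ of Section 2 is \emph{not} assumed here, so I cannot invoke Theorem \ref{T21}; the explicit formula must do the work.

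For part $(i)$ I would compute the second moment. Using $\mathbb{E}\,\exp\{2\sigma_k(W_k(t)-W_k(s))\}=\e^{2\sigma_k^2(t-s)}$ one gets
\[
\mathbb{E}\,|X^x_s(t)|_H^2=\sum_k x_k^2\,\mathbb{E}\,\lambda_k(s,t)^2=\sum_k x_k^2\,\e^{(2\alpha_k+\sigma_k^2)(t-s)}.
\]
If \eqref{E31} holds this is bounded by $\e^{(t-s)\sup_k(2\alpha_k+\sigma_k^2)}|x|_H^2$, giving square integrability for every $x\in H$. Conversely, if $\sup_k(2\alpha_k+\sigma_k^2)=+\infty$, I would pick a subsequence along which $2\alpha_k+\sigma_k^2\to+\infty$ and place $\ell^2$-mass on it by a standard block construction so that the series diverges; this exhibits an $x\in H$ whose solution fails to be square integrable, proving necessity.

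The substance is part $(ii)$. Since $\lambda_k>0$, the operator $\mathcal X(s,t)$ is bounded if and only if $\sup_k\lambda_k(s,t)<+\infty$, i.e. if and only if
\[
\Lambda(s,t):=\sup_k\Big\{\big(\alpha_k-\tfrac12\sigma_k^2\big)(t-s)+\sigma_k\big(W_k(t)-W_k(s)\big)\Big\}<+\infty .
\]
Because the summands are independent, the Kolmogorov $0$–$1$ law makes $\Lambda(s,t)$ a.s.\ a constant in $[-\infty,+\infty]$, and the Borel--Cantelli lemmas give the dichotomy $\Lambda(s,t)<+\infty$ a.s.\ exactly when $\sum_k\mathbb{P}\big(\log\lambda_k(s,t)>M\big)<+\infty$ for some $M$. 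Feeding in the Gaussian tail asymptotics $\mathbb{P}(Z>u)\asymp u^{-1}\e^{-u^2/2}$ with $Z\sim N(0,1)$, the summability threshold becomes $M\gtrsim(\alpha_k-\tfrac12\sigma_k^2)(t-s)+\sigma_k\sqrt{2(t-s)\log k}$; dividing through by $\sqrt{t-s}$ this is precisely the condition $\rho(s,t)<+\infty$ of \eqref{E32}. For the negative direction I would argue as in Skorokhod's example \cite{skorokhod}: if $\rho(s,t)=+\infty$ then for every $M$ infinitely many modes satisfy $\log\lambda_k(s,t)>M$ by the second Borel--Cantelli lemma, and since any flow must agree with $\lambda_k(s,t)\,e_k$ on each $e_k$ on a common full-measure set obtained by countability, the operator norm is a.s.\ infinite, contradicting $\mathcal X(s,t)\in L(H,H)$.

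I expect the delicate point to be the borderline nature of the threshold $\sqrt{2\log k}$: the polynomial correction in the Gaussian tail produces terms of order $k^{-1}(\log k)^{-1/2}$, whose series diverges, so finiteness of the plain supremum $\sup_k[(\alpha_k-\tfrac12\sigma_k^2)(t-s)+\sigma_k\sqrt{2(t-s)\log k}]$ at a \emph{single} time-lag does not by itself force $\Lambda(s,t)<+\infty$. This is exactly where the quantifier ``for all $0\le s<t$'' in \eqref{E32} is used: a configuration that saturates the supremum while violating summability necessarily has $\sigma_k\to+\infty$ along a subsequence, and such a configuration makes $\rho(s,t)$ infinite for all strictly smaller time-lags, hence is excluded once \eqref{E32} is imposed at every $(s,t)$. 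Making this interplay quantitative — together with the routine but necessary verification of joint measurability, of the flow property (iii), and of the upgrade from ``a.s.\ for each fixed $(s,t)$'' to ``a.s.\ simultaneously for all $(s,t)\in\Delta$'' via continuity on a countable dense set — is the part of the argument that needs the most care.
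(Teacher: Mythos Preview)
Your approach is essentially identical to the paper's: both exploit the diagonal structure to obtain the explicit log-normal multipliers $\lambda_k(s,t)$, compute $\mathbb{E}\,\lambda_k(s,t)^2=\e^{(2\alpha_k+\sigma_k^2)(t-s)}$ for part $(i)$, and reduce part $(ii)$ to the almost-sure finiteness of $\sup_k\{(\alpha_k-\tfrac12\sigma_k^2)(t-s)+\sigma_k\sqrt{t-s}\,Z_k\}$ via the fact that $\limsup_k Z_k/\sqrt{2\log k}=1$ for i.i.d.\ standard normals. Your discussion of the borderline threshold and the role of the universal quantifier ``for all $0\le s<t$'' in \eqref{E32} is in fact more careful than the paper's own proof, which simply invokes the $\limsup$ identity without addressing the issue you flag.
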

\begin{proof} Define 
$$
\zeta_k(s,t;\omega):= \exp\left\{ \sigma_k\left(W_k(t;\omega)-W_k(s;\omega)\right)+ \left(\alpha _k -\frac{\sigma_k^2}{2}\right)(t-s)\right\}. 
$$
Clearly random variable $\zeta_k(s,t)$, $k=1\ldots, $ are independent and 
\begin{equation}\label{E33}
\mathbb{E}\, \zeta_k(s,t)= \e^{\alpha_k(t-s)}\qquad\text{and}\qquad  \mathbb{E}\, \zeta^2_k(s,t)= \e^{(2\alpha_k+\sigma_k^2)(t-s)}. 
\end{equation}
Consider a  finite dimensional subspace $V=\text{linspan}\{e_{i_1},\ldots, e_{i_M}\}$ of $H$. Then for any $x\in V$, the solution exists and  
\begin{equation}\label{E34}
X^x_s(t)= \sum_{k=1}^{+\infty} \zeta_k(s,t)\langle x,e_k\rangle_H e_k. 
\end{equation}
Next, note that for any $x\in H$, if  $X^x_s$ is a solution, then it has to be of form \eqref{E34}. For, if $\Pi\colon H\mapsto \text{linspan}\{e_{i_1},\ldots, e_{i_M}\}$ is a projection then $\Pi X^x_s= X^{\Pi x}_s$. Therefore, the first claim follows from \eqref{E33}. Next, \eqref{E11} defines a  stochastic flow $\mathcal X$ if and only if 
$$
\mathbb{P}\left\{ \sup_{|x|_H\le 1} \left\vert X^x_s(t)\right\vert_H<+\infty\right\} = 1. 
$$
Clearly, 
\begin{align*}
&\mathbb{P}\left\{ \sup_{|x|_H\le 1} \left\vert X^x_s(t)\right\vert_H<+\infty\right\} = \mathbb{P}\left\{ \sup_k  \zeta(s,t)<+\infty\right\} 
\\&\qquad = 
\mathbb{P}\left\{ \sup_{k} \left[ \sigma_k\left(W_k(t)-W_k(s)\right)+ \left(\alpha _k -\frac{\sigma_k^2}{2}\right)(t-s) \right]<+\infty\right\}. 
\end{align*}
Therefore the desired conclusion follows from the fact that  if $(Z_k)$  is a sequence of independent $\mathcal{N}(0,1)$ random variables, then 
$$
\limsup_{k\to+\infty} \frac{Z_k}{\sqrt{2\log k}}=1,\qquad \mathbb{P}-a.s.
$$
\end{proof}

\begin{remark}
{\rm Assume \eqref{E31} and \eqref{E32}. Let $\mathcal{X}$ be the stochastic flow. Then 
$$
\mathbb{E}\, \text{Tr} \, \mathcal{X}(s,t)= \sum_{k=1}^{+\infty}\mathbb{E}\,  \zeta_k(s,t)= \sum_{k=1}^{+\infty} \e^{\alpha_k(t-s)}. 
$$
Note that \eqref{E31} and $\sum_{k=1}^{+\infty} \e^{\alpha_k(t-s)}<+\infty$ imply \eqref{E32}. }
\end{remark}
\subsection{Beyond integrability}
 As above, $(\sigma_k)$ is a sequence of nonnegative real numbers. Assume now that $\alpha_k=0$.  Thus 
$$
\zeta_k(s,t)= \exp\left\{ -\frac{\sigma^2_k}{2}(t-s)+ \sigma_k \left(W_k(t)-W_k(s)\right)\right\}. 
$$
Let us assume that 
\begin{equation}\label{E35}
\rho(s,t):= \sup_{k} \left\{-\frac{ \sigma_k^2}{2} \sqrt{t-s} + \sigma_k \sqrt{2\log k}\right)<+\infty, \quad \forall\, 0\le s<t. 
\end{equation}
We do not assume however \eqref{E31}. Then
$$
\mathcal{X}(s,t;\omega)(x)= \sum_{k=1}^{+\infty}\zeta_k(s,t;\omega)\langle x,e_k\rangle _He_k, \qquad 0\le s\le t, \ x\in H, 
$$
is well defined, and  since $\sup_{k} \zeta_k(s,t)<+\infty$, $\mathcal{X} \colon \Delta\times \Omega \mapsto L(H,H)$. Obviously $\mathcal{X}(s,t)$ is a symmetric positive definite operator with eigenvectors $(e_k)$ and eigenvalues $(\zeta_k(s,t))$. Note that it can happen that  $\mathbb{E}\left\vert \mathcal{X}(s,t)(x)\right\vert ^2_H=+\infty$. 
Moreover, if $\alpha_k=0$ then necessarily $\E\,\mathrm{Tr}\la \mathcal{X}(s,t)\ra=+\infty$.

\begin{proposition}\label{P33}
Assume  \eqref{E35}. Then the following holds:
\begin{itemize}
\item[(i)]  If $l\in[0,+\infty]$ is an accumulation point of the sequence $(\sigma_k)$,  then either $l=0$ or $l=+\infty$. 
\item[(ii)] If $\sigma_k\to 0$ then $\sup_k \sigma_k\sqrt{\log k} <+\infty$. In particular, $\mathbb{P}$-a.s. the sequence $\left(\zeta_k(s,t)\right)$ of eigenvalues of $\mathcal{X}(s,t)$ 
has accumulation points different from zero, hence $\mathcal{X}(s,t)$ is not compact $\P$-a.s. 
\item[(iii)] 
If $\sigma_k\to+\infty$ then $\frac{\sigma_k}{\sqrt{\log k}}\to+\infty$ and $\mathcal{X}(s,t)$ is Trace class $\mathbb{P}$-a.s.
 \end{itemize}
\end{proposition}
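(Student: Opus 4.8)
The plan is to treat \eqref{E35} as a purely deterministic constraint on $(\sigma_k)$ and to read off from it the clustering and growth statements, after which the operator-theoretic conclusions will follow from elementary Borel--Cantelli arguments applied to the independent Gaussian increments $G_k:=W_k(t)-W_k(s)\sim\mathcal{N}(0,t-s)$. Since \eqref{E35} holds for every $0\le s<t$, writing $\tau:=\sqrt{t-s}$ it records that for each $\tau>0$
\[
C_\tau:=\sup_k\left(-\frac{\sigma_k^2}{2}\tau+\sigma_k\sqrt{2\log k}\right)<+\infty.
\]
For $(i)$ I would argue by contradiction: if $l\in(0,+\infty)$ were an accumulation point, pick $\sigma_{k_j}\to l$ and fix $\tau=1$; then $-\tfrac12\sigma_{k_j}^2\to-\tfrac12 l^2$ stays bounded while $\sigma_{k_j}\sqrt{2\log k_j}\to+\infty$, so the terms in $C_1$ are unbounded, contradicting $C_1<+\infty$. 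Hence the only admissible accumulation values are $0$ and $+\infty$.

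For $(ii)$ the deterministic part is immediate: rearranging gives $\sigma_k\sqrt{2\log k}\le C_\tau+\tfrac{\tau}{2}\sigma_k^2$, and if $\sigma_k\to 0$ the right-hand side is bounded, whence $\sup_k\sigma_k\sqrt{\log k}<+\infty$. To see that $\mathcal{X}(s,t)$ is not compact I would exhibit an eigenvalue accumulation point away from $0$: the events $\{|G_k|\le 1\}$ are independent with a common positive probability, so by the second Borel--Cantelli lemma infinitely many of them occur almost surely; along such indices $k_j$ the hypothesis $\sigma_{k_j}\to 0$ forces $\log\zeta_{k_j}(s,t)=-\tfrac{\sigma_{k_j}^2}{2}(t-s)+\sigma_{k_j}G_{k_j}\to 0$, i.e. $\zeta_{k_j}(s,t)\to 1$. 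Since $1$ is then an accumulation point of the eigenvalue sequence, the diagonal operator $\mathcal{X}(s,t)$ is not compact $\mathbb{P}$-a.s.

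For $(iii)$ I would first sharpen the growth rate. Dividing $\sigma_k\sqrt{2\log k}\le C_\tau+\tfrac{\tau}{2}\sigma_k^2$ by $\sigma_k^2$ and using $\sigma_k\to+\infty$ yields $\limsup_k \sqrt{2\log k}/\sigma_k\le\tau/2$ for every $\tau>0$; letting $\tau\downarrow 0$ gives $\sigma_k/\sqrt{\log k}\to+\infty$. Since $\mathcal{X}(s,t)$ is positive self-adjoint with eigenvalues $\zeta_k(s,t)$, the trace-class property amounts to $\sum_k\zeta_k(s,t)<+\infty$ a.s. Setting $\beta_k:=\sigma_k\sqrt{t-s}$ (so $\beta_k/\sqrt{\log k}\to+\infty$), the Gaussian tail bound $\mathbb{P}(G_k>2\sqrt{(t-s)\log k})\le k^{-2}$ and the first Borel--Cantelli lemma give, a.s. and for all large $k$, $G_k\le 2\sqrt{(t-s)\log k}$, so that
\[
\log\zeta_k(s,t)\le-\frac{\beta_k^2}{2}+2\beta_k\sqrt{\log k}=-\frac{\beta_k^2}{2}\Big(1-\frac{4\sqrt{\log k}}{\beta_k}\Big)\le-\frac{\beta_k^2}{4}
\]
for $k$ large; because $\beta_k^2/\log k\to+\infty$ this forces $\zeta_k(s,t)\le k^{-2}$ eventually, and summability follows.

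The genuinely substantive points are the two deterministic extractions from \eqref{E35}: the blow-up of $\sigma_k\sqrt{2\log k}$ at a finite nonzero cluster value in $(i)$, and the sharpening $\sigma_k/\sqrt{\log k}\to+\infty$ in $(iii)$ obtained by letting $\tau\downarrow 0$. The probabilistic conclusions are then routine; the only delicate matching is that the almost-sure Gaussian growth of order $\sqrt{\log k}$ must be dominated by the faster growth of $\beta_k$, so that the quadratic term $-\beta_k^2/2$ wins and produces a summable tail.
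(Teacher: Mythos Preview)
Your argument is correct, and in fact more self-contained than the paper's. The deterministic extractions from \eqref{E35} in (i)--(iii) match the paper's reasoning essentially verbatim (the paper dismisses (i) and the first half of (ii) as ``obvious''). The genuine divergence is in the probabilistic parts. For (ii), the paper invokes the almost-sure relation $\liminf_k (W_k(t)-W_k(s))/\sqrt{2\log k}=-\sqrt{t-s}$ and combines it with $\sup_k\sigma_k\sqrt{\log k}<+\infty$ to bound $\liminf_k\sigma_k G_k$ from below; you instead run Borel--Cantelli~II on the events $\{|G_k|\le 1\}$ to produce a subsequence along which $\zeta_{k_j}\to 1$ directly. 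For (iii), the paper proves $\sum_k\zeta_k(s,t)<+\infty$ via the Kolmogorov three-series theorem, handling the truncated mean $\mathbb{E}[\zeta_k\chi_{[0,1]}(\zeta_k)]$ by a Girsanov change of measure; your route is a single Borel--Cantelli~I step on Gaussian tails, giving the pointwise bound $\zeta_k\le k^{-2}$ eventually. Your approach is more elementary and gives a quantitative decay rate; the paper's three-series argument is heavier machinery but would generalise more readily if the $\zeta_k$ were not explicitly log-normal.
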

\begin{proof} Statement $(i)$ is obvious as $\sigma_k\ge 0$. The first part of $(ii)$ is obvious. Assume that $\sigma_k\to 0$ and that $\sup_{k}\sigma_k\sqrt{\log k}<+\infty$. We have to show that $\mathbb{P}$-a.s. the sequence 
$$
\exp\left\{ \sigma_k\left(W_k(t)-W_k(s)\right)\right\}, \qquad k=1,\ldots
$$
has accumulation points different from zero, or equivalently that 
$$
\liminf_{k\to +\infty} \sigma_k\left(W_k(t)-W_k(s)\right)>-\infty, \qquad \mathbb{P}-a.s. 
$$
Since 
$$
\liminf_{k\to +\infty} \frac{W_k(t)-W_k(s)}{\sqrt{2 \log k}}= -\sqrt{t-s}, \qquad \mathbb{P}-a.s.  
$$
we have 
$$
\liminf_{k\to +\infty} \sigma_k \left(W_k(t)-W_k(s)\right)= -\sqrt{t-s} \limsup_{k\to +\infty} \sigma_k \sqrt{2\log k}>-\infty, \quad \mathbb{P}-a.s. 
$$

We are proving now  the last statement od the proposition. Condition \eqref{E35} can be rewritten in the form 
$$
\sup_k\sigma_k^2\left(\frac{\sqrt{2\log k}}{\sigma_k}-\frac{\sqrt{t-s}}{2}\right)<+\infty,\qquad\forall\, 0\le s<t. 
$$
Since $\sigma_k\to +\infty$, we have 
$$
\limsup_{k\to +\infty} \frac{\sqrt{2\log k}}{\sigma_k}< \frac{\sqrt{t-s}}{2}, \qquad\forall\, 0\le s<t, 
$$
which leads to the desired conclusion that $\frac{\sigma_k}{\sqrt{\log k}}\to+\infty$. 

We will use the Kolmogorov three series theorem to show that 
$$
\text{Tr}\, \mathcal{X}(s,t)=\sum_{k=1}^{+\infty}\zeta_k(s,t)<+\infty,\qquad\mathbb{P}-a.s.
$$
Let us fix $s$ and $t$. Define $Y_k=\zeta_k(s,t)\chi_{[0,1]}\left(\zeta_k(s,t)\right)$. We need to show that 
\begin{align}
\sum_{k=1}^{+\infty}\mathbb{P}\left( \zeta_k(s,t)>1\right)&<+\infty,\label{E36}\\
\sum_{k=1}^{+\infty}\mathbb{E}\,  Y_k&<+\infty, \label{E37}\\
\sum_{k=1}^{+\infty}\mathrm{Var}\, Y_k&<+\infty\label{E38}
\end{align}
Denoting by $Z$ a normal  $\mathcal{N}(0,1)$ random variable and putting $b=\frac{\sqrt{t-s}}{2}$ we obtain 
\begin{align*}
\sum_{k=1}^{+\infty}\mathbb{P}\left( \zeta_k(s,t)>1\right) &=\sum_{k=1}^{+\infty}\mathbb{P}\left(Z>b\sigma_k\right) \\
&\le c+\dfrac{1}{\sqrt{2\pi}}\sum_{k=2}^{+\infty}\dfrac{1}{b\sigma_k}\e^{-b^2\sigma_k^2/2}=c+ \dfrac{1}{\sqrt{2\pi}}\sum_{k=2}^{+\infty}\dfrac{1}{b\sigma_k}\e^{-\delta_k\log k}\\
&\le c+C\sum_{k=2}^{+\infty}\dfrac{1}{k^{\delta_k}}
\end{align*}
with $\delta_k=\frac{b^2\sigma_k^2}{2\log k}$. Since $\delta_k\to+\infty$, \eqref{E36} follows. Let  $\left.\frac{\d\mathbb Q}{\d\mathbb{P}}\right|_{\mathfrak {F}_t}=\zeta_k(s,t)$,
where $\mathfrak{F}_t=\sigma\left(W_k(r), k=1,\ldots, r\le t\right)$. Then $W_k^{\mathbb Q}(r):=W_k(r)-\sigma_kr$, $r\le t$, are independent Wiener processes under $\mathbb Q$ and 
$$
\log\zeta_k(s,t)=\frac{1}{2}\sigma_k^2(t-s)+\sigma_k\left(W_k^{\mathbb Q}(t)-W_k^{\mathbb{Q}}(s)\right).
$$
Therefore, 
$$
\mathbb{E}\, Y_k =\mathbb{E}\, \zeta_k(s,t)\chi_{[0,1]}\left(\zeta_k(s,t)\right)= {\mathbb Q}\left(\zeta_k(s,t)\le 1 \right) 
=\mathbb Q\left( Z>\frac{1}{2}\sigma_k\sqrt{t-s}\right)
$$
and by the same arguments as above we find that \eqref{E37} holds as well. Finally, since 
$$
\mathbb{E}\, Y_k^2=\mathbb{E}\, \zeta^2_k(s,t)\chi_{[0,1]}\left(\zeta_k(s, t)\right) \le \mathbb{E}\,  Y_k,
$$
condition \eqref{E38} is satisfied and the proof is complete. 
\end{proof}
\subsection{The case of commuting operators}  Let us recall  that in the Skorokhod example  $A=0$ and $B_j=e_j\otimes e_j$ are commuting operators. We have 
\begin{align*}
X^x_s(t)&= \sum_{j=1}^{+\infty} \exp\left\{ W_j(t)-W_j(s)-\frac{t-s}{2}\right\} e_j\otimes e_j(x)\\
&= \lim_{N\to+ \infty} \sum_{j=1}^N \exp\left\{ W_j(t)-W_j(s)-\frac{t-s}{2}\right\} e_j\otimes e_j(x)\\
&= \lim_{N\to +\infty} \exp\left\{ \sum_{j=1}^N e_j\otimes e_j \left( W_j(t)-W_j(s)-\frac{t-s}{2}\right)\right\} (x).
\end{align*}
The  convergence is not uniform in $x$.  In the last line we calculate the exponent of a bounded operator 
$$
\sum_{j=1}^N e_j\otimes e_j \left( W_j(t)-W_j(s)-\frac{t-s}{2}\right).
$$
A proof the following simple generalization of Proposition \ref{P31} is left to the reader. Recall that a sequence of bounded operators $S_n$ converges to a bounded operator $S$ \emph{strongly} if $S_nx\to Sx$ for any $x\in H$. 
\begin{proposition}\label{P34}
Assume that $(B_k)$ is an  infinite sequence of bounded commuting operators on a Hilbert space  $H$. Then:
\begin{itemize}
\item[(i)] For any $x\in H$ and $s\ge 0$ there is a square integrable solution $X^x_s$ to \eqref{E11} if and only for all $0\le s\le t$, the sequence $\exp\left\{(t-s)\sum_{k=1}^n B_k^2\right \}$ converges strongly as $n\to +\infty$. Moreover, 
$$
X^x_s(t)= \lim_{n\to +\infty}\exp\left\{  \sum_{k=1}^n B_k \left(W_k(t)-W_k(s)\right)+ B_0(t-s)  - \frac{1}{2} \sum_{k=1}^n B_k^2  \left(t-s\right) \right\}x, 
$$
where the limit is in $L^2(\Omega, \mathfrak{F},\mathbb{P};H)$. 
\item[(ii)] \eqref{E11} generates a stochastic flow if and only if for all $0\le s\le t$, $\mathbb{P}$-a.s.
$$
\exp\left\{  \sum_{k=1}^n B_k \left(W_k(t)-W_k(s)\right) - \frac{1}{2} \sum_{k=1}^n B_k^2  \left(t-s\right) \right\}
$$
converges as $n\to+ \infty$ in the operator norm.  
\end{itemize}
\end{proposition}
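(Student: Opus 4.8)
The plan is to reduce everything to the finite truncations and then pass to the limit, exactly as in the proof of Proposition \ref{P31}, using commutativity to solve the truncated equations explicitly. Fix $s\ge 0$, write $\tau=t-s$ and $\Delta W_k=W_k(t)-W_k(s)$, and regard $B_0$ (the drift $A$) as a bounded operator commuting with all the $B_k$. For fixed $n$ the operators $B_0,B_1,\dots,B_n$ all commute, so I would first check by It\^o's formula that
\[
X_n^x(t)=\exp\Big\{B_0\tau+\sum_{k=1}^n B_k\,\Delta W_k-\tfrac12\sum_{k=1}^n B_k^2\,\tau\Big\}x
\]
solves \eqref{E11} truncated at the first $n$ noises: differentiating the commuting exponential, the It\^o correction $-\tfrac12\sum_k B_k^2\,\tau$ cancels exactly the second-order term $\tfrac12\sum_k B_k^2\,dt$ coming from the quadratic variation, leaving the drift $B_0$ and the martingale part $\sum_k B_k\,dW_k$. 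By uniqueness this is the only square-integrable solution of the truncated problem, and any solution of \eqref{E11} must agree with the limit of these truncations.

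For part (i) I would compute second moments. Splitting the exponential into its deterministic factor $D_n=\exp\{B_0\tau-\tfrac12\sum_{k\le n}B_k^2\tau\}$ and its random factor $R_n=\exp\{\sum_{k\le n}B_k\Delta W_k\}$, and using the independence of the $W_k$ together with the scalar identity $\mathbb E\,e^{\lambda\Delta W_k}=e^{\lambda^2\tau/2}$ applied through the joint functional calculus of the commuting family, one obtains
\[
\mathbb E\,\langle X_n^x(t),X_m^x(t)\rangle_H=\big\langle \exp\{2B_0\tau+\tau\textstyle\sum_{k\le m}B_k^2\}x,x\big\rangle_H,\qquad m\le n,
\]
and hence $\mathbb E\,|X_n^x(t)-X_m^x(t)|_H^2=\langle (S_n-S_m)x,x\rangle_H$ with $S_n=\exp\{2B_0\tau+\tau\sum_{k\le n}B_k^2\}$. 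Since $\exp\{2B_0\tau\}$ is a fixed bounded invertible operator, the sequence $(X_n^x(t))$ is Cauchy in $L^2(\Omega;H)$ for every $x$ if and only if $\langle S_nx,x\rangle_H$ converges for every $x$, which by monotonicity of $\sum_{k\le n}B_k^2$ and the uniform boundedness principle is equivalent to the strong convergence of $\exp\{\tau\sum_{k\le n}B_k^2\}$. The common limit is then the asserted formula, and passing to the limit in the mild equation shows it solves \eqref{E11}; conversely, as in Proposition \ref{P31}, applying the spectral projections of the commuting family shows any $L^2$ solution must coincide with this limit, so divergence of $\langle S_nx,x\rangle_H$ precludes existence. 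This gives both implications of (i).

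For part (ii) the point is that a stochastic flow requires $\mathcal X(s,t)$ to be a genuine element of $L(H,H)$ with $\mathcal X(s,t)x=X^x_s(t)$ a.s.\ for every $x$. Writing $\mathcal X_n(s,t)=\exp\{B_0\tau\}\exp\{\sum_{k\le n}B_k\Delta W_k-\tfrac12\sum_{k\le n}B_k^2\tau\}$ and stripping the fixed bounded invertible factor $\exp\{B_0\tau\}$, the flow exists precisely when the random exponentials $\exp\{\sum_{k\le n}B_k\Delta W_k-\tfrac12\sum_{k\le n}B_k^2\tau\}$ converge in the operator norm $\mathbb P$-a.s.: if they do, the limit is a bounded operator realizing the flow, and conversely a bounded limiting operator together with a.s.\ convergence on a countable dense set forces uniform convergence on the unit ball, i.e.\ operator-norm convergence, exactly as in the Skorokhod-type argument behind Proposition \ref{P31}.

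The routine steps are the It\^o verification and the scalar Gaussian computation. The two places needing care are the \emph{necessity} directions — showing that the existence of an $L^2$ solution (resp.\ of the flow) forces the truncations to converge in the respective topology, which rests on uniqueness of mild solutions, on the projection argument, and on upgrading pointwise a.s.\ convergence to operator-norm convergence — and the second-moment identity when the $B_k$ are not self-adjoint. In all the examples considered here the $B_k$ are normal, so the factorization $\mathbb E[R_m^\star R_n]=\exp\{\cdots\}$ is immediate from the joint spectral calculus; for a general commuting family one records the same identity through the joint functional calculus of $\{B_k,B_k^\star\}$. The main obstacle is this last bookkeeping together with the a.s.\ operator-norm upgrade in (ii), which is precisely the gap that separates the genuine flow from the mere $L^2$ solution.
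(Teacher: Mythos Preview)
The paper leaves this proof to the reader, so there is no reference argument to compare against; your outline is exactly the ``generalize Proposition~\ref{P31}'' approach the authors intend, and part~(i) goes through for commuting self-adjoint $B_k$ via the cross-moment identity $\mathbb E\,|X_n^x-X_m^x|_H^2=\langle(S_n-S_m)x,x\rangle_H$ that you compute, together with monotonicity of $S_n$ and polarization.

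The real gap is the ``only if'' direction of~(ii), which you yourself flag as the main obstacle. The step ``a bounded limiting operator together with a.s.\ convergence on a countable dense set forces uniform convergence on the unit ball'' is false in general --- strong convergence to a bounded limit does not imply operator-norm convergence --- and nothing in the proof of Proposition~\ref{P31} asserts it. In fact the implication already fails inside the diagonal framework of Section~\ref{SDiagonal}: take $\alpha_k=0$ and $\sigma_k=c/\sqrt{\log k}$ for some $c>0$. Then $\rho(s,t)=\sup_k\big\{-\tfrac{c^2}{2\log k}\sqrt{t-s}+c\sqrt{2}\big\}<\infty$, so by Proposition~\ref{P31}(ii) the flow $\mathcal X(s,t)=\operatorname{diag}(\zeta_k)_{k\ge1}$ exists. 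But the $n$-th truncated exponential is $\mathcal X_n(s,t)=\operatorname{diag}(\zeta_1,\dots,\zeta_n,1,1,\dots)$, so $\|\mathcal X_n-\mathcal X_m\|=\max_{m<k\le n}|\zeta_k-1|$; since $\limsup_k\sigma_k(W_k(t)-W_k(s))=c\sqrt{2(t-s)}>0$ a.s.\ by the same $\limsup Z_k/\sqrt{2\log k}=1$ fact used in the paper, we get $\limsup_k\zeta_k=\exp\{c\sqrt{2(t-s)}\}>1$ and $(\mathcal X_n)$ is not Cauchy in operator norm. Thus the flow exists while the exponentials do \emph{not} converge in $\|\cdot\|$: the ``only if'' in~(ii) as stated cannot be proved, and you should treat it as an imprecision in the proposition rather than try to patch your argument.
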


\subsection{System of multiplication operators}
In this section we assume that  
$$
H=L^2(\mathbb{R}^d, \vartheta(\xi)\d \xi), 
$$
where the weight $\vartheta\colon \mathbb{R}^d\mapsto (0,+\infty)$ is a measurable function. Let $W$ be a Wiener process taking values in $H$. Then, see see e.g. \cite{DaPrato-Zabczyk},
\begin{equation}\label{E39}
W=\sum_{k} W_ke_k\,,
\end{equation}
where $(W_k)$ are independent real-valued Wiener processes,  and   $\{e_k\}$ is an orthonormal basis of the Reproducing Hilbert Kernel Space (RHKS for short)  of $W$.  

Consider the equation 
\begin{equation}\label{E310}
\d X(t)= X(t) \d W(t), \qquad X(s)=x, 
\end{equation}
Taking into account \eqref{E39} we can write \eqref{E310} in the  form 
\begin{equation}\label{E311}
\d X(t)= \sum_{k} B_k X\d W_k(t),\qquad X(s)=x,  
\end{equation}
where $B_k$ are multiplication operators; $B_kh= he_k$, $h\in H$.  Clearly a multiplication operator $h\mapsto he$ is bounded on $H$   if and only if $e\in L^\infty(\mathbb{R}^d)$. Note that bounded multiplication operators commute and are symmetric. 
\begin{proposition}\label{P35}
Assume that $e_k\in L^\infty(\mathbb{R}^d)$. Then:  
\begin{itemize}
\item[(i)] For any $x\in H$ and $s\ge 0$ there exists a square integrable solution $X^x_s$ to \eqref{E311} if and only if $\sum_{k=1}^{+\infty} e_k^2 \in L^\infty(\mathbb{R}^d)$. Moreover, 
$$
X^x_s(t)= L^2-\lim_{n\to +\infty} \exp\left\{  \sum_{k=1}^n \left(W_k(t)-W_k(s)\right)e_k   - \frac{t-s}{2} \sum_{k=1}^n e_k^2  \right\}x.  
$$
\item[(ii)] Assume that  $\sum_{k=1}^{+\infty} e_k^2 \in L^\infty(\mathbb{R}^d)$. Then \eqref{E311} defines a stochastic flow if and only if 
$$
\mathbb{P}\left\{  \operatorname{ess}\sup\limits_{\xi\in \mathbb{R}^d}\sum_k e_k(\xi)W_k(t)<+\infty\right\}=1, 
$$
equivalently iff the process $W(t)= \sum_k W_k(t)e_k $ lives in $L^\infty(\mathbb{R}^d)$; that is 
$$
\mathbb{P}\left( W(t)\in L^\infty(\mathbb{R}^d)\right)=1.
$$ 

\item[(iii)] If $\sum_{k} \log \sqrt{k} \vert e_k\vert \in L^\infty(\mathbb{R}^d)$, then \eqref{E311} defines stochastic flow on $H$. 
\end{itemize}
\end{proposition}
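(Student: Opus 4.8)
The plan is to reduce the assertion to the criterion of part (ii) and then verify the resulting $L^\infty$-condition by means of the law of the iterated logarithm for the increments of the driving Wiener processes, exactly in the spirit of the proof of Proposition~\ref{P31}. Since (ii) is stated for the function $W(t)=\sum_k W_k(t)e_k$, and the commutative solution involves the increments $W_k(t)-W_k(s)$, the goal is to show that for each fixed $0\le s\le t$ the function $\xi\mapsto\sum_k\bigl(W_k(t)-W_k(s)\bigr)e_k(\xi)$ lies in $L^\infty(\mathbb{R}^d)$, $\P$-a.s.

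First I would check that the hypothesis of (iii) guarantees the standing assumption $\sum_k e_k^2\in L^\infty(\mathbb{R}^d)$ needed to apply (ii). Indeed, $\log\sqrt{k}\ge 1$ for $k\ge 8$, so the bound $\sum_k\log\sqrt{k}\,|e_k|\le M$ a.e.\ forces $\sum_{k\ge 8}|e_k|\le M$ a.e.; together with $e_1,\dots,e_7\in L^\infty$ this yields $\sum_k|e_k|\in L^\infty$, say $\sum_k|e_k(\xi)|\le M_0$ for a.e.\ $\xi$. Consequently $e_k(\xi)^2\le M_0|e_k(\xi)|$ and $\sum_k e_k^2(\xi)\le M_0^2$ a.e., so $\sum_k e_k^2\in L^\infty$ and (ii) is available.

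Next I would invoke the pointwise-in-$k$ Gaussian tail estimate. The increments $W_k(t)-W_k(s)$, $k\ge 1$, are i.i.d.\ $\mathcal N(0,t-s)$, so by the same $\limsup$ identity used in the proof of Proposition~\ref{P31},
$$
\limsup_{k\to+\infty}\frac{|W_k(t)-W_k(s)|}{\sqrt{2\log k}}=\sqrt{t-s},\qquad \P\text{-a.s.}
$$
Hence on a set of full probability there is a finite random constant $C=C(\omega)$ with $|W_k(t;\omega)-W_k(s;\omega)|\le C\sqrt{1+\log k}$ for every $k\ge 1$, and for such $\omega$ the triangle inequality gives, for a.e.\ $\xi$,
$$
\Bigl|\sum_k \bigl(W_k(t;\omega)-W_k(s;\omega)\bigr)e_k(\xi)\Bigr|\le C\sum_k \sqrt{1+\log k}\,|e_k(\xi)|.
$$

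Finally I would control $\sum_k\sqrt{1+\log k}\,|e_k|$ in $L^\infty$ by the hypothesis. The key comparison is that $\sqrt{1+\log k}\le \log\sqrt{k}$ for all sufficiently large $k$ (say $k\ge k_0$), since $\sqrt{\log k}$ grows strictly slower than $\tfrac12\log k$; thus $\sum_{k\ge k_0}\sqrt{1+\log k}\,|e_k(\xi)|\le\sum_{k\ge k_0}\log\sqrt{k}\,|e_k(\xi)|\le M$ for a.e.\ $\xi$, while the finitely many remaining terms are bounded by $\sum_{k<k_0}\sqrt{1+\log k}\,\|e_k\|_\infty<+\infty$ because each $e_k\in L^\infty$. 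Combining the last displays shows $\bigl\|\sum_k(W_k(t)-W_k(s))e_k\bigr\|_{L^\infty}\le C(\omega)\cdot\mathrm{const}<+\infty$ $\P$-a.s., which is precisely the criterion of (ii); hence \eqref{E311} defines a stochastic flow. The only genuinely delicate points are the matching of the iterated-logarithm weight $\sqrt{\log k}$ against the hypothesis weight $\log\sqrt{k}$, and the preliminary verification that the hypothesis already forces $\sum_k e_k^2\in L^\infty$; both become routine once the reduction via (ii) and the $\limsup$ bound are set up.
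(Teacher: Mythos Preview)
Your proof of part~(iii) is correct and follows the same route as the paper, which disposes of (iii) in a single line: ``The last part follows from the law of iterated logarithm.'' You have simply unpacked that line: the preliminary check that $\sum_k\log\sqrt{k}\,|e_k|\in L^\infty$ forces $\sum_k e_k^2\in L^\infty$, the Gaussian $\limsup$ bound $\limsup_k |W_k(t)-W_k(s)|/\sqrt{2\log k}=\sqrt{t-s}$ (exactly the fact used in the proof of Proposition~\ref{P31}), and the elementary comparison $\sqrt{1+\log k}\le \log\sqrt{k}=\tfrac12\log k$ for large $k$, are precisely the ingredients implicit in the paper's one-line appeal.
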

\begin{proof} The first part follows from Proposition \ref{P34}.  The second part is a reformulation of the second part of Proposition \ref{P34}. The las part follows from the law of iterated logarithm. 
\end{proof}
\begin{example}
Assume that $W=W(t,\xi)$ is a spatially homogeneous Wiener process on $\mathbb{R}^d$, see e.g. \cite{DaPrato-Zabczyk}.  Then $W(t)= \sum_{k} e_k W_{k}$, where  $e_k =\widehat{f_k\mu}$,  $\{f_k\}$ is a orthonormal basis of $L^2_{(s)}(\mathbb{R}^d,\mu)$ and $\mu $ is the spectral measure of $W$. The sum over  finite or infinite number of indices $k$.  We can assume that $e_k\in L^\infty(\mathbb{R}^d)$ by choosing suitable $f_k$. Then 
$$
\sum_{k} e_k^2 =  \sum_{k} \left\vert \widehat {f_k \mu}\right\vert ^2 =  \mu(\mathbb{R}^d). 
$$
Therefore, \eqref{E311} has a square  integrable  solution if and only if $\mu$ is finite, that is $W$ is a random field. The condition $\mathbb{P}\left( W(t)\in L^\infty(\mathbb{R}^d)\right)$ wich is if and only if condition for the existence of stochastic flow holds only in some degenerated cases. Namely assume that 
$$
W(t,\xi)= \sum_{k=1}^{+\infty} a_k \left(W_k(t)\cos\langle \xi, \eta_k\rangle + \tilde W_k(t)\sin \langle \xi,\eta_k\rangle\right),  
$$
where $\{\eta_k\}\subset\mathbb{R}^d$, $(a_k)\in l^2$,  and $W_k$ and $\tilde W_k$, $k\in \mathbb{N}$,  are independent real-valued Brownian motions. Then $W =W(t,\xi)$ is a spatially homogeneous Wiener process.  Taking into account that 
$$
\limsup_{k\to +\infty} \frac{W_k(t)}{\sqrt{2\log k}}= \sqrt{t}, 
$$
we see the equation defines stochastic flow on any weighted $L^2$-space if $\sum_{k} \vert a_k\vert \sqrt{\log k}<+\infty$. 
\end{example}
\begin{example}
Assume that $W$ is a Brownian sheet on $[0,L)^{d+1}$ where $L\in [0,+\infty]$.  To be more precisely $W(t, \xi_1, \xi_2,\ldots,\xi_d)$ is a Gaussian random field on $[0,L)^{d+1}$ with the  covariance 
$$
\mathbb{E}\, W(t,\xi_1,\ldots,\xi_d)W(s,\eta_1,\ldots,\eta_d)= t\wedge s \prod_{k=1}^d \xi_k\wedge \eta_k. 
$$
Then 
$$
f_k = \frac{\partial ^d}{\partial \xi_1,\ldots\partial \xi_d} e_k
$$
is an orthonormal basis of $L^2([0,L)^d)$, respectively.  Hence 
$$
\sum_{k} e_k^2(\xi) = \sum_k \langle \chi_{[0,\xi_1]\times \ldots \times [0,\xi_d]}, f_k \rangle ^2 =  \xi_1\xi_2\ldots \xi_d. 
$$
Therefore \eqref{E311} has a square  integrable  solution (in $L^2([0,T)^d$)  if and only if $L<\infty$.   Clearly, Brownian sheet has continuous trajectories, and therefore for arbitrary  $T<+\infty$ and $L<+\infty$ we have  $\mathbb{P}\left\{ \sup_{0\le t<T, \xi\in [0,L)^d} W(t, \xi)<+\infty\right\}=1$. Hence if \eqref{E311} is considered on a bounded domain the equation defines stochastic flow. Let us  observe that the stochastic flow can be also well defined, but not square integrable,  if $L=+\infty$.  Indeed, the stochastic flow exists, if and only if 
\begin{equation}\label{E412}
\mathbb{P}\left\{\sup\limits_{\xi\in [0,+\infty)^d}\left( W(t,\xi_1,\ldots,\xi_d)- \frac t 2 \xi_1\xi_2,\ldots \xi_d\right) <+\infty\right\}=1. 
\end{equation}
\end{example}

\section{Equations in Schatten classes}\label{SSchatten}
The problem of the existence of the stochastic flow in the finite dimensional case  $H=\mathbb{R}^d$ is simple. The  flow $\mathcal{X}(s,t)$ takes valued in the space of bounded linear operators $L(H,H)$ that can be identified with the space of $d\times d$ matrices $M(d\times d)$. We have the following SDE for $\mathcal{X}$ in the space $M(d\times d)$; 
\begin{equation}\label{E51}
\d \mathcal{X} = A\mathcal{X} \d t + (\d \mathcal{W})\mathcal{X}, \qquad \mathcal{X} (s,s)=I,
\end{equation}
where $\mathcal {W}:= \sum_{k=1}^d B_k W_k$  and $I$ is the identity matrix. By a standard fixed point argument \eqref{E51} has a unique global solution.
\par
 In infinite dimensional case, even if $(B_k)$ is a finite sequence of bounded linear operators, the situation is different. There is no proper theory of stochastic integration in the space $L(H,H)$ if $H$ is infinite-dimensional. One can overcome this difficulty by replacing $L(H,H)$ with a smaller space of operators, such as  the Hilbert--Schmidt or, more generally, the Schatten classes  of operators. 
\subsection{Main result}
Let us recall that  for every $p\in[1,+\infty)$ the Schatten class $\mathbb S^p$ of compact operators $K\colon H\to H$ is a Banach space endowed with the norm 
$$
\|K\|_p=\left(\sum_{k=1}^{+\infty} \lambda_k\left( K^\star K\right)^{p/2}\right)^{1/p}<+\infty\,,
$$
where $\lambda_k(K^\star K)$ stands for the $k$-th eigenvalue of $K^\star K$. For every $p\in[1,+\infty)$ the space $\mathbb S^p$ is a separable Banach space. 
\begin{lemma}\label{L41}
For every $p\ge 2$ the space $\mathbb S^p$ is an M-type 2 Banach space. 
\end{lemma}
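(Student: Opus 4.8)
The plan is to reduce the assertion to the $2$-uniform smoothness of $\mathbb{S}^p$ and then invoke the standard implication that $2$-uniform smoothness yields M-type $2$. Recall that a separable Banach space $(E,\|\cdot\|)$ is of M-type $2$ if there is a constant $C$ such that every finite $E$-valued martingale $(M_n)$ with differences $(d_k)$ satisfies $\mathbb{E}\bigl\|\sum_k d_k\bigr\|^2\le C\sum_k\mathbb{E}\|d_k\|^2$. By a theorem of Pisier this holds precisely when $E$ admits an equivalent norm whose modulus of smoothness is of power type $2$, that is, when for some $D\ge 1$
\begin{equation*}
\|x+y\|^2+\|x-y\|^2\le 2\|x\|^2+2D^2\|y\|^2,\qquad x,y\in E.
\end{equation*}
For $\mathbb{S}^p$ the genuine Schatten norm will already satisfy such an inequality, so no renorming will be needed; thus it suffices to produce this estimate on $\mathbb{S}^p$ for $p\ge 2$.

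The key step, and the main obstacle, is the non-commutative $2$-smoothness inequality: for $p\ge 2$ and all $x,y\in\mathbb{S}^p$,
\begin{equation*}
\|x+y\|_p^2+\|x-y\|_p^2\le 2\|x\|_p^2+2(p-1)\|y\|_p^2.
\end{equation*}
In the commutative case ($L^p$ with $p\ge 2$) this follows from Clarkson-type two-point inequalities applied pointwise, but in the Schatten setting the eigenvectors of $x+y$, $x-y$ and $x$ need not be aligned, so the scalar argument does not transfer directly and a genuinely non-commutative trace inequality is required. I would obtain the estimate from the sharp uniform smoothness bound for trace norms due to Ball, Carlen and Lieb; alternatively, the earlier non-sharp power-type-$2$ smoothness of $\mathbb{S}^p$ for $p\ge 2$ established by Tomczak-Jaegermann already suffices. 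Proving this trace inequality from scratch is the hard part, and I would treat it as a cited input rather than reprove it.

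Granting the smoothness inequality, the passage to M-type $2$ is routine. Since $\|\cdot\|_p$ is Gâteaux differentiable for $1<p<\infty$, denote by $x^\ast$ the derivative of $\tfrac12\|\cdot\|_p^2$ at $x$; the symmetric estimate then yields a one-sided form $\|x+y\|_p^2\le\|x\|_p^2+2\langle x^\ast,y\rangle+K_p\|y\|_p^2$ with a constant $K_p$ depending only on $p$. For a martingale $(M_n)$ with differences $d_n$ I would condition on $\mathfrak{F}_{n-1}$: because $\mathbb{E}[d_n\mid\mathfrak{F}_{n-1}]=0$, the linear term vanishes upon taking conditional expectation, giving
\begin{equation*}
\mathbb{E}\bigl[\|M_n\|_p^2\mid\mathfrak{F}_{n-1}\bigr]\le\|M_{n-1}\|_p^2+K_p\,\mathbb{E}\bigl[\|d_n\|_p^2\mid\mathfrak{F}_{n-1}\bigr].
\end{equation*}
Taking expectations and summing telescopically yields $\mathbb{E}\|M_n\|_p^2\le K_p\sum_k\mathbb{E}\|d_k\|_p^2$, which is exactly the M-type $2$ inequality. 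As $\mathbb{S}^p$ is separable for every $p\in[1,+\infty)$, this completes the argument.
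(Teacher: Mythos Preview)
Your argument is correct, but it follows a different path from the paper's. The paper proceeds by citing three facts from the Hyt\"onen--van Neerven--Veraar--Weis volumes: first that $\mathbb{S}^p$ is a UMD space for all $p\in(1,\infty)$, then that $\mathbb{S}^p$ has (Rademacher) type $2$ for $p\ge 2$, and finally that UMD together with type $2$ implies M-type $2$. Your route is more direct: you go straight to the $2$-uniform smoothness of the Schatten norm via the Ball--Carlen--Lieb (or Tomczak-Jaegermann) inequality and then pass to the martingale inequality by the standard Pisier mechanism, which you in fact spell out. Your approach avoids the UMD machinery entirely and even delivers the explicit constant $p-1$; the paper's approach, by contrast, packages everything into references but has the side benefit that the UMD property it records is independently relevant for the stochastic integration used later in that section.
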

\begin{proof}
By Propositions  5.4.2 in \cite{jan1} the space $\mathbb S^p$ is a UMD space for every $p\in(1,+\infty)$. By Proposition 7.1.11 in \cite{jan2},  $\mathbb S^p$ has type 2 for $p\in[2,+\infty)$, and by Proposition 4.3.13 in \cite{jan1},  M-type 2 property follows. 
\end{proof}

Lemma \ref{L41} ensures that if $p\ge 2$, then $\mathbb{S}^p$ is a right space for stochastic integration, for more details see e.g. \cite{Brzezniak}. We only recall here that if  $W_k$ are independent Wiener processes, ad  $\psi_k$ are $\mathbb{S}^p$-valued progressively measurable processes, such that 
$$
\mathbb{E} \int_0^T \|\psi_k(s)\|^2_p\d s <+\infty, \qquad k=1,2, \ldots,
$$
then for each $k$, the integral 
$$
\int_0^t \psi_k(s)\d W_k(s), 
$$
is well-defined, has continuous trajectories in $\mathbb{S}^p$ and there exists a universal constant $C$ such that 
$$
\mathbb{E}\left\|\int_0^T \psi_k (s)\d W_k(s)\right\| ^2_p \le C\, \mathbb{E} \int_0^T \|\psi_k(s)\|^2_p\d s <+\infty\,.
$$ 
Moreover, 
$$
\mathbb{E} \left\|\sum_{k=1}^N \int_0^t \psi_k(s) \d W_k(s)\right\| ^2_p \le C' \sum_{k=1}^N \mathbb{E} \int_0^T \|\psi_k(s)\|^2_p\d s. 
$$
 The definition of the Wiener process given below uses the fact that $L(H,H)=\la\mathbb S^1\ra^\star$. 
\begin{definition} 
Let $B_k\in L(H,H)$ for $k\ge 1$. We call $ \mathcal{W}=\sum_{k=1}^{+\infty} W_kB_k$  a \emph{cylindrical Wiener process} on $L(H,H)$  if for every $K\in\mathbb{S}^1$ the process 
$$
W^K(t)=\sum_{k=1}^{+\infty}\mathrm{Tr}\left( KB_k\right) W_k(t)\,,
$$
is a real-valued Wiener process, and there is a constant $C$ independent of $K$ such that 
\begin{equation}\label{Cyl}
\mathbb{E} \left|W^K(t)\right|^2=t \sum_{k=1}^{+\infty} \left(\mathrm{Tr}\left( KB_k\right)\right)^2\le Ct \|K\|_1^2<+\infty\,.
\end{equation}
\end{definition}
Note that condition \eqref{Cyl} holds if $\sum_k\left\|B_k\right\|^2_{L(H,H)}<\infty$. Indeed, we have (see Theorem 3.1 in \cite{simon}) 
$$
\left|\mathrm{Tr}\la KB_k\ra\right|\le \left\|KB_k\right\|_1\le \left\|B_k\right\|_{L(H,H)}\|K\|_1
$$
and the claim follows. 
\begin{lemma}\label{lem00}
Let $(A,D(A))$ be the generator of a $C_0$-semigroup $(\e^{tA})$  on $H$. For $T\in L(H,H)$ we define
\begin{equation}\label{E43}
\mathcal{S}(t)T=\e^{tA}\circ T,\quad  t\ge0\,,
\end{equation}
and denote $\mathcal S=\{\mathcal S(t);\, t\ge 0\}$. Then: \\
(a) $\mathcal{ S}=\left(\mathcal{ S}(t)\right)$ defines a semigroup (but in general not a $C_0$-semigroup) of bounded operators on $L(H,H)$.\\
(b) For every $p\in[1,+\infty)$ we have $\mathcal S(t)\mathbb S^p\subset\mathbb S^p$ and $\mathcal S$ defines a 
$C_0$-semigroup on $\mathbb S^p$. 
\end{lemma}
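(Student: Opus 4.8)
The plan is to separate the purely algebraic assertions from the continuity assertions, dispatching the semigroup identities and boundedness first and concentrating the real work on the strong continuity claimed in part (b).

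For part (a), I would first record the semigroup law by a direct computation: since composition of operators is associative and $\e^{tA}\e^{sA}=\e^{(t+s)A}$,
$$
\mathcal{S}(t)\mathcal{S}(s)T=\e^{tA}\circ\la\e^{sA}\circ T\ra=\la\e^{tA}\e^{sA}\ra\circ T=\e^{(t+s)A}\circ T=\mathcal{S}(t+s)T,
$$
with $\mathcal{S}(0)=I$ because $\e^{0A}=I$. Each $\mathcal{S}(t)$ is bounded on $L(H,H)$ by submultiplicativity of the operator norm, $\|\mathcal{S}(t)T\|=\|\e^{tA}T\|\le\|\e^{tA}\|\,\|T\|$, so $\|\mathcal{S}(t)\|\le\|\e^{tA}\|$. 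To see that $\mathcal{S}$ is in general not a $C_0$-semigroup I would test strong continuity on $T=I$: there $\|\mathcal{S}(t)I-I\|=\|\e^{tA}-I\|$, and strong continuity of $\mathcal{S}$ would force this to tend to $0$, i.e. norm continuity of $\la\e^{tA}\ra$ at $0$. By the fact recalled in Remark \ref{R22} that $\la\e^{tA}\ra$ is norm continuous if and only if $A$ is bounded, this fails whenever $A$ is unbounded, so $\mathcal{S}$ cannot be strongly continuous on $L(H,H)$ in general.

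For part (b), the invariance $\mathcal{S}(t)\mathbb S^p\subset\mathbb S^p$ and the boundedness of $\mathcal{S}(t)$ on $\mathbb S^p$ rest on the ideal property of the Schatten classes: for $K\in\mathbb S^p$ and $B\in L(H,H)$ one has $BK\in\mathbb S^p$ with $\|BK\|_p\le\|B\|\,\|K\|_p$. Applying this with $B=\e^{tA}$ gives $\|\mathcal{S}(t)K\|_p\le\|\e^{tA}\|\,\|K\|_p$, so $\mathcal{S}(t)$ restricts to a bounded operator on $\mathbb S^p$, and the semigroup law carries over unchanged from part (a).

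The heart of the proof, and the main obstacle, is the strong continuity of $\mathcal{S}$ on $\mathbb S^p$, which I would establish by an $\varepsilon/3$ density argument. On a rank-one operator $u\otimes v$ (acting as $h\mapsto\langle h,v\rangle_H u$) one computes $\mathcal{S}(t)\la u\otimes v\ra=\la\e^{tA}u\ra\otimes v$, and since every Schatten norm of a rank-one operator equals the product of the factor norms,
$$
\|\mathcal{S}(t)\la u\otimes v\ra-u\otimes v\|_p=\left|\e^{tA}u-u\right|_H\,|v|_H\longrightarrow 0\quad\text{as }t\to0^+
$$
by strong continuity of $\la\e^{tA}\ra$ on $H$; by linearity the same holds for every finite-rank operator. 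Since finite-rank operators are dense in $\mathbb S^p$ and $M:=\sup_{0\le t\le1}\|\mathcal{S}(t)\|_{L(\mathbb S^p)}\le\sup_{0\le t\le1}\|\e^{tA}\|<+\infty$ by the growth bound of the $C_0$-semigroup, for $K\in\mathbb S^p$ and a finite-rank approximant $F$ with $\|K-F\|_p<\varepsilon$ I would split
$$
\|\mathcal{S}(t)K-K\|_p\le\|\mathcal{S}(t)\la K-F\ra\|_p+\|\mathcal{S}(t)F-F\|_p+\|F-K\|_p\le(M+1)\varepsilon+\|\mathcal{S}(t)F-F\|_p,
$$
and let $t\to0^+$ to get $\limsup_{t\to0^+}\|\mathcal{S}(t)K-K\|_p\le(M+1)\varepsilon$; as $\varepsilon$ is arbitrary, $\mathcal{S}$ is strongly continuous on $\mathbb S^p$, completing the proof that it is a $C_0$-semigroup. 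The two ingredients I rely on here, the rank-one Schatten-norm identity and the density of finite-rank operators in $\mathbb S^p$, are standard, so the argument is routine once the uniform bound $M$ is in hand; the only point requiring care is keeping the estimate uniform in $t$ over a neighbourhood of $0$.
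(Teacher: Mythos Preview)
Your proof is correct and follows essentially the same route as the paper: the ideal property of $\mathbb S^p$ for invariance and boundedness, then an $\varepsilon/3$ argument using density of finite-rank operators for strong continuity. Your version is in fact more detailed than the paper's, since you explicitly verify the rank-one case and make the uniform bound $M=\sup_{0\le t\le 1}\|\e^{tA}\|$ explicit, whereas the paper simply asserts $\lim_{t\to 0}\|\mathcal S(t)T_n-T_n\|_p=0$ for finite-rank $T_n$ without further comment.
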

\begin{proof}
Since $\mathbb S^p$ is an ideal, we have  $\mathcal S(t)\mathbb S^p\subset\mathbb S^p$ and the operator $\mathcal S(t)\colon \mathbb S^p\to\mathbb S^p$ is bounded. To prove strong continuity of $\mathcal S$ on $\mathbb S^p$, let us recall that for every $p\in[1,+\infty)$ the space $\mathbb S^p$ is the closure of the space of finite rank operators in the $\mathbb S^p$-norm. Let $t>0$ and $T\in\mathbb S^p$ be fixed. There exists a sequence $\la T_n\ra$ of finite rank operators, such that $\left\|T_n-T\right\|_p\to 0$. Choose $n$ such that for $t\le 1$, 
$$
\left\|T-T_n\right\|_p+\left\|\e^{tA}\la T-T_n\ra\right\|_p<\varepsilon\,.
$$
Since 
\begin{align*}
\left\|\mathcal S(t)T-T\right\|_p&\le \left\|\mathcal S(t)\la T-T_n\ra\right\|_p+\left\|\mathcal S(t)T_n-T_n\right\|_p+\left\|T-T_n\right\|_p\\
&\le\varepsilon+\left\|\mathcal S(t)\la T-T_n\ra\right\|_p
\end{align*}
and 
\[\lim_{t\to 0}\left\|\mathcal S(t)T_n-T_n\right\|_p=0\,,\]
the strong continuity follows. 
\end{proof}
Let $A$ be the generator of a $C_0$-semigroup $(\e^{tA})$ on $H$ and let $\mathcal  A$ be the generator of the semigroup $\mathcal S$  defined in Lemma  \ref{lem00}.  Let $W$ be a cylindrical Wiener process on $L(H,H)$. Consider the following  stochastic equation on $\mathbb{S}^p$,  where $p\ge 2$, 
\begin{equation}\label{E42}
\d \mathcal{X} =\mathcal{A}\mathcal{X} \d t+(\d\mathcal{W})\mathcal{X}, \qquad \mathcal{X}(s)=I. 
\end{equation}
\begin{definition}
Let $p\ge 1$. We will say that a process $\mathcal{X}(s,\cdot;\cdot) \colon [s,+\infty)\times \Omega\mapsto L(H,H)$ with continuous paths in $L(H,H)$ is an $\mathbb{S}^p$-valued   solution to \eqref{E42} if $\mathcal{X}(s,\cdot;\cdot)\colon (s,+\infty)\times \mapsto \mathbb{S}^p$ is measurable and adapted, 
$$
\mathbb{E}\int_s^T \|\mathcal{X}(t)\|_p^2\d t <+\infty,\qquad \text{for any $T\in (s,+\infty)$}, 
$$
and 
$$
\mathcal{X}(t)=\mathcal{S}(t-s)I+\int_s^t\mathcal{S}(t-r)(\d\mathcal{W}(r))\mathcal{X}(r) \qquad \text{for  $t\ge s$, $\mathbb{P}$-a.s.}. 
$$
\end{definition}
In the equation above 
\[\int_s^t\mathcal{S}(t-r)(\d\mathcal{W}(r))\mathcal{X}(r)=\sum_k\mathcal S(t-r)B_k\mathcal X(r)\d W_k(r)\,.\]
We have the following result. 
\begin{proposition}
 If $\mathcal{X}$ is a solution to \eqref{E42} with  $\mathcal{A}$ as above, then  $\mathcal{X}$ is the flow corresponding to \eqref{E11}.  \end{proposition}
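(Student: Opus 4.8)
The plan is to show that for each fixed $s\ge 0$ and $x\in H$ the $H$-valued process $t\mapsto \mathcal{X}(t)x$ coincides with the mild solution $X^x_s$ of \eqref{E11}, and then to read off the three defining properties of a stochastic flow. The engine of the argument is the observation that the semigroup $\mathcal{S}$ acts by left composition with $\e^{tA}$, so that evaluation at a point $x$ converts the $\mathbb{S}^p$-valued equation \eqref{E42} into the scalar-initial-datum mild equation for \eqref{E11} on $H$.

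First I would record that the evaluation map $\delta_x\colon \mathbb{S}^p\to H$, $\delta_x(K)=Kx$, is bounded and linear, since $|Kx|_H\le \|K\|\,|x|_H\le \|K\|_p\,|x|_H$. Because bounded linear operators commute with Hilbert-space-valued stochastic integrals, applying $\delta_x$ to the mild identity
\[
\mathcal{X}(t)=\mathcal{S}(t-s)I+\sum_k\int_s^t \mathcal{S}(t-r)B_k\mathcal{X}(r)\,\d W_k(r)
\]
and using $(\mathcal{S}(\tau)T)x=\e^{\tau A}Tx$ yields
\[
\mathcal{X}(t)x=\e^{(t-s)A}x+\sum_k\int_s^t \e^{(t-r)A}B_k\bigl(\mathcal{X}(r)x\bigr)\,\d W_k(r),\qquad t\ge s.
\]
Thus $u(t):=\mathcal{X}(t)x$ solves exactly the integral equation defining the mild solution of \eqref{E11} with initial datum $x$ at time $s$. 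By the existence and uniqueness of mild solutions under \eqref{Assumption} (Theorem \ref{T21b}), $\mathcal{X}(s,t)x=X^x_s(t)$ for all $t\ge s$, $\P$-a.s.; this is property (ii) of the definition of a stochastic flow.

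Next I would verify the remaining two properties. Property (i), continuity of $t\mapsto\mathcal{X}(s,t)x$ in $H$, is immediate: an $\mathbb{S}^p$-solution has paths that are continuous in $L(H,H)$, and $\delta_x$ is bounded, so $t\mapsto\mathcal{X}(s,t)x$ has continuous $H$-valued paths. For property (iii), the cocycle identity $\mathcal{X}(t,r)\circ\mathcal{X}(s,t)=\mathcal{X}(s,r)$, I would invoke the flow (Markov) property of mild solutions of \eqref{E11}, itself a consequence of uniqueness: for $s\le t\le r$ one has $X^x_s(r)=X^{X^x_s(t)}_t(r)$, $\P$-a.s. Combined with the identification $\mathcal{X}(\sigma,\tau)y=X^y_\sigma(\tau)$ established above, this gives $\mathcal{X}(s,r)x=\mathcal{X}(t,r)\mathcal{X}(s,t)x$ for every $x$, hence the cocycle identity as operators $\P$-a.s., and for all $\omega$ after passing to a suitable version.

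The step I expect to be the crux is the commutation of $\delta_x$ with the $\mathbb{S}^p$-valued stochastic integral, which must be justified rigorously. One reduces to the finitely many operator-valued integrals $\int_s^t \mathcal{S}(t-r)B_k\mathcal{X}(r)\,\d W_k(r)$, pulls the bounded map $\delta_x\colon\mathbb{S}^p\to H$ inside each integral, and controls the tail of the series using the $\mathbb{S}^p$ It\^o isometry recalled after Lemma \ref{L41} together with the square-integrability built into the definition of an $\mathbb{S}^p$-solution. Everything else is bookkeeping, the only genuine analytic input being the uniqueness of mild solutions to \eqref{E11}, which is available from the earlier sections.
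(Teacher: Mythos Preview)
The paper states this proposition without proof, so there is nothing to compare your argument against; what you wrote is the natural argument and is essentially correct. Pushing the bounded evaluation $\delta_x\colon\mathbb{S}^p\to H$ through the mild formulation of \eqref{E42}, and then invoking uniqueness of mild solutions of \eqref{E11}, is exactly the right mechanism, and your handling of the commutation of $\delta_x$ with the $\mathbb{S}^p$-valued stochastic integral via approximation by simple integrands is adequate.

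One correction of detail: you cite Theorem~\ref{T21b} and assumption~\eqref{Assumption} to obtain uniqueness, but in Section~\ref{SSchatten} the standing hypothesis on the noise is only $\sum_k\|B_k\|^2<+\infty$, not $\sum_k\|B_k\|<+\infty$, so Theorem~\ref{T21b} is not available here. What you actually need is much less, namely uniqueness of square-integrable mild solutions of \eqref{E11} in $H$; this follows from the standard contraction argument under $\sum_k\|B_k\|^2<+\infty$ (see, e.g., \cite{DaPrato-Zabczyk}). Replace the reference accordingly and the argument stands.
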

\begin{theorem}\label{T42} Assume that   there exist constants $\gamma <1/2$ and  $C>0$ such that 
\begin{equation}\label{E44}
\|S(t)\|_p\le\frac{C}{t^\gamma},\quad t\le 1. 
\end{equation}
Assume that $\mathcal{W}=\sum_kB_k W_k$  is a cylindrical    Wiener process on $L(H,H)$ such that $\sum_k \|B_k\|^2 <+\infty$. 
Then for any $s\ge 0$, \eqref{E42} has a unique solution in $\mathbb{S}^p$.  Moreover, $(s,+\infty)\ni t\mapsto \mathcal{X}(s,t)\in \mathbb{S}^p$ is continuous $\mathbb{P}$-a.s. 
\end{theorem}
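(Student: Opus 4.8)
The plan is to construct the solution as the unique fixed point of the mild formulation and then upgrade its path regularity in two stages. Fix $s\ge0$ and $T>s$, write $M_T=\sup_{0\le u\le T}\|\e^{uA}\|_{L(H,H)}<+\infty$, and use throughout that $\mathbb{S}^p$ is a two-sided ideal, so that $\|\e^{uA}B_kK\|_p\le M_T\|B_k\|\,\|K\|_p$ for $K\in\mathbb{S}^p$. Let $\mathcal H_T$ be the Banach space of progressively measurable $\mathbb{S}^p$-valued processes $\Phi$ on $(s,T)$ with $\|\Phi\|_{\mathcal H_T}^2:=\E\int_s^T\|\Phi(t)\|_p^2\,\d t<+\infty$, and on it define the affine map
$$
(\mathcal K\Phi)(t)=\mathcal S(t-s)I+\sum_k\int_s^t\mathcal S(t-r)B_k\Phi(r)\,\d W_k(r);
$$
a solution of \eqref{E42} is exactly a fixed point of $\mathcal K$. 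First I would check $\mathcal K(\mathcal H_T)\subset\mathcal H_T$: the free term $\mathcal S(t-s)I=\e^{(t-s)A}$ belongs to $\mathbb{S}^p$ by \eqref{E44}, and $\E\int_s^T\|\e^{(t-s)A}\|_p^2\,\d t\le C^2\int_s^T(t-s)^{-2\gamma}\,\d t<+\infty$ precisely because $2\gamma<1$ (this is the only use of \eqref{E44} in the existence part), while the stochastic-integral estimate recalled after Lemma \ref{L41} together with the ideal inequality gives
$$
\E\Big\|\sum_k\int_s^t\mathcal S(t-r)B_k\Phi(r)\,\d W_k(r)\Big\|_p^2\le C'M_T^2\Big(\sum_k\|B_k\|^2\Big)\E\int_s^t\|\Phi(r)\|_p^2\,\d r.
$$

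The same estimate applied to $\mathcal K\Phi-\mathcal K\Psi$ (whose free terms cancel) yields $\E\|(\mathcal K\Phi)(t)-(\mathcal K\Psi)(t)\|_p^2\le\kappa\,\E\int_s^t\|\Phi(r)-\Psi(r)\|_p^2\,\d r$, and iterating this bound produces the factorial gain $\|\mathcal K^n\Phi-\mathcal K^n\Psi\|_{\mathcal H_T}^2\le\frac{(\kappa T)^n}{n!}\|\Phi-\Psi\|_{\mathcal H_T}^2$. Hence some power of $\mathcal K$ is a contraction, and the Banach fixed point theorem produces a unique $\mathcal X\in\mathcal H_T$ solving \eqref{E42}; uniqueness in the sense of the theorem follows since any solution is a fixed point of $\mathcal K$.

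To prepare the continuity argument I would next derive a singular moment bound. Fix any $q\in(2,1/\gamma)$, a nonempty range exactly because $\gamma<1/2$. Using higher-moment (Burkholder--Davis--Gundy type) estimates in the M-type $2$ space $\mathbb{S}^p$ (see \cite{Brzezniak}), the ideal inequality and Minkowski's integral inequality, the function $g(t):=\big(\E\|\mathcal X(t)\|_p^q\big)^{2/q}$ satisfies $g(t)\le c\,(t-s)^{-2\gamma}+c\int_s^t g(r)\,\d r$; a Gronwall argument then gives $g(t)\le C(t-s)^{-2\gamma}$, that is $\E\|\mathcal X(t)\|_p^q\le C(t-s)^{-\gamma q}$, which is integrable on $(s,T)$ since $\gamma q<1$.

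Finally, continuity on the open interval $(s,+\infty)$ I would obtain by the Da Prato--Kwapie\'n--Zabczyk factorization. The free term $t\mapsto\e^{(t-s)A}$ is $\mathbb{S}^p$-continuous on $(s,\infty)$ because $\mathcal S$ is a $C_0$-semigroup on $\mathbb{S}^p$ (Lemma \ref{lem00}) and $\e^{(t-s)A}\in\mathbb{S}^p$ for $t>s$; it cannot extend continuously to $t=s$, since $\|\e^{(t-s)A}\|_p\to+\infty$, which is exactly why the statement asserts continuity only on $(s,+\infty)$. For the stochastic convolution $Z$ I would pick $\alpha\in(1/q,\tfrac12)$ subject to the extra constraint $\alpha<\tfrac12-\gamma+\tfrac1q$ (such $\alpha$ exist for $q\in(2,1/\gamma)$ precisely when $\gamma<1/2$), rewrite $Z$ through the factorization identity as the deterministic convolution of
$$
Y_\alpha(r)=\sum_k\int_s^r(r-\sigma)^{-\alpha}\mathcal S(r-\sigma)B_k\mathcal X(\sigma)\,\d W_k(\sigma)
$$
against the kernel $(t-r)^{\alpha-1}\mathcal S(t-r)$, check by the $q$-th moment estimate, Minkowski and a Beta-function computation (inserting $g(\sigma)\le C(\sigma-s)^{-2\gamma}$) that $\E\int_s^T\|Y_\alpha(r)\|_p^q\,\d r<+\infty$, and invoke the standard fact that $f\mapsto\int_s^t(t-r)^{\alpha-1}\mathcal S(t-r)f(r)\,\d r$ maps $L^q(s,T;\mathbb{S}^p)$ into $C([s,T];\mathbb{S}^p)$ whenever $\alpha>1/q$ (see \cite{DaPrato-Zabczyk}); summing with the free term gives the claimed continuity. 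I expect this last step to be the main obstacle, since one must reconcile three competing exponent constraints — $\alpha<\tfrac12$ for stochastic integrability of $Y_\alpha$, $\alpha>1/q$ for continuity of the fractional convolution, and $q<1/\gamma$ for integrability of the singular free term — the admissible window for $(\alpha,q)$ being nonempty exactly under the hypothesis $\gamma<1/2$, and the singularity of the free term at $t=s$ must be propagated carefully through both the a priori bound and the Beta-integral estimate.
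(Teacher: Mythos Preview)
Your proof is correct and follows the same strategy as the paper: a Banach fixed-point argument in $L^2((s,T)\times\Omega;\mathbb{S}^p)$ for existence and uniqueness, followed by the Da Prato--Kwapie\'n--Zabczyk factorization for path continuity. The only differences are technical---you obtain contraction by iterating the Lipschitz estimate rather than by passing to an exponentially weighted norm, and your continuity argument (with the intermediate $L^q$ moment bound $\E\|\mathcal X(t)\|_p^q\le C(t-s)^{-\gamma q}$ and explicit exponent bookkeeping for $\alpha$ and $q$) is considerably more detailed than the paper's one-sentence sketch.
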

\begin{proof}  Let us fix $0\le s<T<+\infty$. Let $\Psi$ be the space of all adapted measurable processes $\psi \colon (s,T] \times \Omega\mapsto \mathbb{S}^p$ such that 
$$
\mathbb{E}\int_s^T \|\psi(t)\|^2_p \d t <+\infty. 
$$
On $\Psi$ consider the family of equivalent norms 
$$
||| \psi ||| _{\beta} := \left(\mathbb{E}\int_s^T \e^{-\beta t} \|\psi(t)\|^2_p \d t\right)^{1/2}, \qquad \beta \ge 0.  
$$
Note that, as $(\e^{tA})$ is $C_0$ on $H$ there is a constant  $C_1<+\infty$ such that for $t\in (s,T]$,  and $\psi\in \Psi$, 
\begin{align*}
\sum_{k} \mathbb{E} \int_s^t \|\mathcal{S}(t-r)B_k \psi(r)\|_p^2 \d r &\le \sum_{k} \mathbb{E} \int_s^t \|\e^{(t-r)A}\|^2 \|B_k\|^2 \|\psi(r)\|_p^2 \d r \\
&\le C_1 \sum_{k} \|B_k\|^2 \,\mathbb{E} \int_s^t  \|\psi(r)\|_p^2 \d r <+\infty. 
\end{align*}
Therefore, by assumption \eqref{E44}, the mapping 
$$
\mathcal{I}\psi (t):= \mathcal{S}(t-s)I+\int_s^t\mathcal{S}(t-r)(\d\mathcal{W}(r))\psi(r), \qquad \psi\in \Psi,\quad t\in (s,T], 
$$
is well-defined from $\Psi$ into $\Psi$. For $\beta$ large enough $\mathcal I$ is contraction on $(\Psi,\vert \|\cdot\|\vert_\beta)$. For we have 
\begin{align*}
||| \mathcal {I}(\psi)-\mathcal{I}(\phi) |||^2 &\le C_2 \sum_{k}\|B_k\|^2 \mathbb{E} \int_s^T \e^{-\beta t} \int_s^t \|\psi(r)-\phi(r)\|^2_p\d r \d t\\
&\le C_3\mathbb{E} \int_s^T \e^{-\beta r}\|\psi(r)-\phi(r)\|^2_p\int_r^T \e^{-\beta(t-r)}\d t  \d r \\
&\le C_3 \frac{1}{\beta} ||| \psi-\phi |||^2_\beta.
\end{align*}
Thus by the Banach fixed point theorem there is an $\mathcal{X}(s,\cdot)\in \Psi $ such that $\mathcal {I} \left(\mathcal{X}(s,\cdot)\right)= \mathcal{X}(s,\cdot)$. What is left is to show that for any $\psi\in \Psi$, the stochastic integral 
$$
\int_s^t\mathcal{S}(t-r)(\d\mathcal{W}(r))\psi(r), \qquad t\ge s, 
$$
has continuous paths in $\mathbb{S}^p$. Since there exists an $\alpha >0$ such that 
$$
\int_s^T (t-s)^{-\alpha}\|\mathcal{S}(t-s) I\|^2_{p}\d t<+\infty, 
$$
the desired continuity follows from Burkholder inequality  by a standard modification of the Da Prato--Kwapien--Zabczyk factorization method. 
\end{proof}

\begin{example}
Assume that $A$ is diagonal $A= -\sum_{k} \alpha _k e_k\otimes e_k$, where $(e_k)$ is an orthonormal basis of $H$, and $\alpha_k\ge 0$ are real numbers. Then $A$ generates a $C_0$-semigroup $\e^{tA}$, $t\ge 0$,  on $H$.  Moreover, $\e^{tA}\in \mathbb{S}^p$ for $t>0$ if and only if 
$$
\|\e^{tA}\|_{p}= \left( \sum_{k} \e^{-p\lambda_k t}\right)^{1/p}<+\infty. 
$$
In particular, we can consider the heat semigroup generated by a Dirichlet Laplacian $\Delta$ in a bounded subdomain  of $\R^2$ with sufficiently smooth boundary. The eigenvalues of $\Delta$ have asymptotics $\lambda_{kn}\sim k^2+n^2$, hence 
$$
\sum_{k,n} \e^{-pt\la k^2+n^2\ra}\sim\frac{1}{2tp}\,,
$$
which yields 
$$
\|\e^{t\Delta} \|_{p}\sim\la\frac{1}{2tp}\ra^{1/p}\,.
$$
Therefore, condition \eqref{E44} is satisfied if and only if $p>2$. If $B_k\colon H\to H$ are bounded operators, such that $\sum_k\left\|B_k\right\|^2<+ \infty$,  then Theorem \ref{T42} ensures the existence of the stochastic flow  in $\mathbb{S}_p$ for any $p>2$, but the Hilbert--Schmidt theory developed in \cite{Flandoli} cannot be directly applied. Note however that  in \cite{Flandoli}, $B_k$ can be unbounded.
\end{example}

\end{document}